\newenvironment{dedication}
  {
   \itshape             
   \raggedleft          
  }
  {\par 
  }
\newcommand{\spaces}{$\left(\Omega, \mathcal{A},\mathbb{P},\mathcal{F}\right)$}
\newtheorem{teor}{Theorem}[section]
\newtheorem{lemma}[teor]{Lemma}
\newtheorem{proposiz}[teor]{Proposition}
\newtheorem{defn}[teor]{Definition}
\newtheorem{osserv}[teor]{Remark}
\newtheorem{ass}{Assumptions}
\begin{document}

\title[A vector  Girsanov result ...]{A vector Girsanov result and its applications to conditional measures
 via the Birkhoff integrability. }
\thanks{-The Fondo Ricerca di Base 2018 
University of Perugia - 
  and  the GNAMPA -- INDAM (Italy) Project  "Dinamiche non autonome, analisi reale e applicazioni" (2018) supported this research.}

\begin{dedication}
In memory of Domenico Candeloro who is for us,\\ Master, Mentor and Friend, \\ \dag \  May $3^{\circ}$ 2019 \\ \mbox{~}

\end{dedication}
\author[Candeloro]{\bf Domenico Candeloro}
\address{Department of Mathematics and Computer Sciences \\ University of Perugia\\
Via Vanvitelli, 1 - 06123 Perugia (Italy), Orcid Id:  0000-0003-0526-5334}
\email{candelor@dmi.unipg.it}

\author[Sambucini]{\bf Anna  Rita Sambucini}
\address{\rm Department of Mathematics and Computer Sciences \\ University of Perugia\\ Via Vanvitelli, 1 - 06123 Perugia (Italy) , Orcid Id: 0000-0003-0161-8729} \email{anna.sambucini@unipg.it}

\author[Trastulli]{\bf Luca Trastulli}
\address{\rm Department of Mathematics and Computer Sciences \\ University of Perugia\\ Via Vanvitelli, 1 - 06123 Perugia (Italy) , Orcid Id:  0000-0002-7722-4008} 
\email{luca.trastulli@gmail.com}

\subjclass[2010]{28B20, 58C05,28B05, 46B42, 46G10, 18B15.}
\keywords{Birkhoff  integral,  vector measure, Girsanov Theorem.}
\date{}

\begin{abstract} 
Some integration techniques for  real-valued  functions with 
respect to vector measures with values in  Banach spaces
 (and viceversa) are investigated in order to 
establish abstract versions of classical theorems of Probability and 
Stochastic Processes.
In particular the Girsanov Theorem is extended and  used 
with the treated
methods.
\end{abstract}
\maketitle

\section{Introduction}\label{one}

The theory of stochastic processes plays a very important role 
in  modelling various phenomena, 
 in a large class of disciplines such as physics, economics, 
statistics, finance, biology and chemistry.
 Then it is crucial to extend as much as possible the tools and 
results regarding this theory, in order to 
make them available even in abstract and general contexts.
A very important tool in  Measure Theory 
is the Girsanov  Theorem, strictly linked 
to the well-known Wiener stochastic process called the standard Brownian motion $(w_t)_{t\in[0,\infty)}$,
 defined on a probability space $\left(\Omega, \mathcal{A}, \mathbb{P}\right)$.
 A classic formulation of this result in the real case can be found in \cite{Mikosch} and it  
allows to change the underlying probability measure $\mathbb{P}$, through the definition of
Radon-Nikod\'ym derivative, in order to obtain an equivalent measure $Q$.
This turns out to be useful,
for example, in mathematical finance when  a neutral risk measure must be determined, in 
the Black-Scholes model.

Here we  generalize the Girsanov Theorem to the case of vector measure spaces following  the idea formulated in \cite{Mikosch} for the real Brownian motion and using 
the   Birkhoff vector integral studied in 
\cite{Fremlin0,Fremlin1,Fremlin2,cmn,cg,cg2016,cc, rodriguez1,rodriguez2,rodriguez3,cascales,potyrala,potyrala2,
fernandez,vm}, in \cite{bf}
for non additive-measures  and  in  \cite{Boccutosambucini, candeloro1,candeloro2,cascales1,ccgs2015a,cgsub1, dms16,cs2014,cs2015,bcs2014} for  the 
 multivalued integration. Other results on the Brownian motion subject are given also in \cite{lm10, GL,m09}.
This paper is inspired by \cite{girsanov,trastulli},
 in particular some of the results  were announced at the ICSSA 2018 Conference. \\
Now, we give a plan of the paper. In Section  \ref{two},
after an introduction of the Birkhoff integrals (Definitions \ref{b1} and \ref{birkhoff 2 specie}) the properties of such integrals are studied togheter with a link between them (Theorem \ref{3.3}). Moreover,  the notions of conditional expectation in this framework is given togheter with a tower property for martingales  and  Theorem \ref{teorema4}.
In section \ref{tre}  the main result: a vector version of the Girsanov result (Theorem \ref{GB}) is presented after having introduced the equivalent martingale measure and under Assumptions \ref{da1a3} and \ref{quarta}. At the end of this section an example is given satisfying Theorem \ref{GB}.
  Section \ref{quattro} is devoted to  applications of  Theorem \ref{GB} such as conditional measures (Proposition \ref{4.1} for $C([0,T])$-valued measures)   and to extensions of the  Itô representation of stochastic $X$-valued processes, using vector-stochastic integral and the classical Itô formulas.
In particular, in Theorem \ref{agB},   a process of the type
$$C_t={\scriptstyle (Bi_1)\hskip-.1cm}\int_0^t\Psi(s)ds+{\scriptstyle (Bi^{\star}_1)\hskip-.1cm}\int_0^t \Phi(s) dw_s, \quad \quad (\text{under}\,\,  \mathbb{P}),  \quad t\in [0,T]$$ is considered
and it is proved that it is possible 
 to eliminate the drift term in order to  obtain a local martingale.

\section{The Birkhoff integrals and their properties}\label{two}
Let $\left(\Omega,\mathcal{A},\nu\right)$ denote a measure space and $\left(X,\left\|\cdot \right\|\right)$ 
a Banach space.
From now on, with the letters $\mu, \nu$ we refer to scalar measures, that is
$\nu:\mathcal{A}\rightarrow \mathbb{R}^+_0$
 while we use letters as $N,M$ to denote vector measures, that is
 $N:\mathcal{A}\rightarrow X$.
 We also use capital letters like $X,Y$ to denote arbitrary Banach spaces, $X^{\ast},Y^{\ast}$ to denote
 their dual spaces and $x^{\ast},y^{\ast}$ the elements of the dual spaces. With letters like $\phi,\psi$
 we refer to scalar functions, and with $\Phi,\Psi$ to vector valued functions, defined on the measure space 
$\left(\Omega, \mathcal{A}, \nu\right)$.
We also denote with $\mathcal{B}(\mathbb{R}), (\mathcal{B}(I))$ as usual the Borel $\sigma$-algebra   on the real line
 (on the interval $I=[0,T], T>0$) and with $\mathcal{B}(X)$  the  Borel $\sigma$-algebra
 on $X$.
Finally, we shall denote by $a_t,b_t,z_t,$ scalar-valued stochastic processes, while $A_t,B_t,Z_t$ denote $X$-valued stochastic processes.
We recall that an $X$-valued stochastic process is a strongly measurable function
$Z:(I\times \Omega,\mathcal{B}(I)\otimes \mathcal{A})\rightarrow (X,\mathcal{B}(X))$.
There are many different versions of the Birkhoff integral (see \cite{birkhoff}). We shall use the
 following one, 
that provides two different kinds of integrals.
In order to do this we recall some basic definitions.
\bigskip
\begin{defn}\label{birkoff 1 specie} \rm
We say that a finite or countable family of non-empty  measurable sets  $\mathit{P}:=\left(U_j\right)_{j\in J}$ is a \emph{partition of } $\Omega$ if $U_j$ and $U_k$ are pairwise disjoint 
and they
 cover $\Omega$, that is $\bigcup_{j\in J}U_j=\Omega$.
We denote by
$\mathcal{P}$ the set of all partitions of the set $\Omega$.
Given two different partitions $\mathit{P},\mathit{P'}$ we say that the partition $\mathit{P}$ is \emph{finer than} 
$\mathit{P'}$ if for every $U\in \mathit{P}$ there exists a $U'\in \mathit{P'}$ such that $U\subset U'$.
\end{defn}
We distinguish between two different kinds of Birkhoff integral. The first one is about the integration
 of an $X$-valued function with respect to a scalar measure. The second one is the notion of integral 
of a scalar function with respect to a vector measure.
\begin{defn}[First type Birkhoff integral]\label{b1} \rm
Let $\Phi:\Omega \rightarrow X$ be a 
vector space-valued
 function and    $\nu:\mathcal{A}\rightarrow \mathbb{R}^+_0$ be
 a scalar, countably additive measure. Then
$\Phi$ is said to be \emph{first type Birkhoff integrable with respect to $\nu$}, briefly
 $\Phi\in Bi_1\left(\Omega,\nu\right)$ (or simply $Bi_1$ if there is not ambiguity about the space and the measure), 
if there exists $\mathcal{I} \in X$ \ such that for all $\varepsilon>0$ there exists a partition of $\Omega$,  
$P_{\varepsilon}$, such that, for every countable partition $\left(U_n\right)_{n\in \mathbb{N}}$ 
of $\Omega$, finer than $P_{\varepsilon}$ and for all  $\omega_n \in U_n$, it is
$\limsup_n \left\| \left(\sum_{k=1}^{n} \Phi(\omega_k)\nu(U_k)-\mathcal{I}\right)\right\|<\varepsilon$.
We call $\mathcal{I}\in X$ the 
\textit{Birkhoff integral of $\Phi$ with respect to $\nu$,} and we denote it by 
${\scriptstyle (Bi_1)\hskip-.1cm}\displaystyle{\int}_\Omega \Phi d\nu.$
\end{defn}
Now we are going
to define the second type of Birkhoff integral.
\begin{defn}
\label{birkhoff 2 specie} \rm
Let $\phi:\Omega \rightarrow \mathbb{R}$  and  $N:\mathcal{A}\rightarrow  X$ be a  
countably additive measure. Then
$\phi$ is said to be  \emph{second type Birkhoff integrable with respect to N},
 briefly $\phi\in Bi_2\left(\Omega,N\right)$ (or simply $Bi_2$),
 if there exists $\mathcal{I} \in X$ \ such that for all $\varepsilon>0$ there exists a partition of $\Omega$, 
 $P_{\varepsilon}$, such that, for every countable partition $\left(U_n\right)_{n\in \mathbb{N}}$ of 
$\Omega$, finer than $P_{\varepsilon}$ and for all  $\omega_n \in U_n$, it is
$\limsup_n \left\| \left(\sum_{k=1}^{n} \phi(\omega_k)N(U_k)-\mathcal{I}\right)\right\|<\varepsilon$.
We call $\mathcal{I}\in X$ the Birkhoff integral of $\phi$ with respect to $N$ and we denote it by 
${\scriptstyle (Bi_2)\hskip-.1cm}\displaystyle{\int}_\Omega \phi dN.$
\end{defn}
 The first (second) type Birkhoff integrability/integral of a function on a set $A \in \mathcal{A}$ is defined in the usual  manner 
since, thanks to a Cauchy criterion the integrability of the function restricted to $A$ follows immediately.

\begin{osserv}\label{pettisobtained}\rm
If $\mu$ is   $\sigma$-finite  then
the $Bi_1$ integrability is correspondent to  the classic Birkhoff integrability for Banach space-valued mappings (see also  
\cite[Theorem 3.18]{ccgs2015a}).
Moreover, 
  since the Birkhoff integral is stronger than the Pettis integral, it is clear that, as soon as $F$ is first type
 Birkhoff integrable with respect to $m$, the mapping $M:=A\mapsto {\scriptstyle (Bi_1)\hskip-.1cm}\displaystyle{\int}_A F dm$ is a countably additive measure.
\\
For the second type, 
 the mapping $M:=A\mapsto {\scriptstyle (Bi_2)\hskip-.1cm}\displaystyle{\int}_A f dN$ is weakly countably additive, since for
each $x^*$ in the dual space
$X^*$ the scalar mapping $f$ is integrable with respect to the scalar measure $x^*(N)$ 
(and to its variations).
Then, thanks to the Orlicz-Pettis Theorem (see \cite{pettis}), $M$ turns out to be also strongly countably additive.
\end{osserv}

We want to show a link between these two types of Birkhoff integral. Firstly we recall a result concerning the
 first type Birkhoff integrability.

\begin{teor}{\rm\cite[Th 3.14]{candeloro2}}
\label{3.1}
 Let $\Phi$ be a  strongly measurable and $Bi_1\left(\Omega,\nu\right)$-integrable function. Then, for every $\varepsilon>0$ there exists a countable partition 
$P^*:=\left\lbrace U_n, \ n\in \mathbb{N}\right\rbrace$ of measurable subsets of $\Omega$, such that,
 for every finer partition $P^{'}:=\left\lbrace V_m,\ m\in \mathbb{N}\right\rbrace$ of $P^*$ 
and for every $\omega_m\in V_m$, we have 
$$\sum_{m}\left\|\Phi(\omega_m)\nu(V_m) - {\scriptstyle (Bi_1)\hskip-.1cm} \int_{V_m} \Phi d \nu\right\|\leq \varepsilon.$$ 
\end{teor}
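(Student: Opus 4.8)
The plan is to combine one geometric estimate with a countable partition manufactured directly from strong measurability; the $Bi_1$-integrability hypothesis is used only to guarantee that the integrals $ {\scriptstyle (Bi_1)\hskip-.1cm}\int_{V_m}\Phi\,d\nu$ occurring in the statement are well defined (this is precisely what the Cauchy criterion recalled above provides), and then that they coincide with the Pettis integrals of the restrictions.

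\emph{Geometric estimate.} Let $V\in\mathcal A$ with $0<\nu(V)<\infty$ and $\Phi(V)$ bounded. Since the Birkhoff integral is stronger than the Pettis integral, for every $x^{*}\in X^{*}$ one has $x^{*}\bigl(\nu(V)^{-1}{\scriptstyle (Bi_1)\hskip-.1cm}\int_V\Phi\,d\nu\bigr)=\nu(V)^{-1}\int_V x^{*}\Phi\,d\nu\le\sup_{\omega\in V}x^{*}\bigl(\Phi(\omega)\bigr)$, so by Hahn--Banach separation $\nu(V)^{-1}{\scriptstyle (Bi_1)\hskip-.1cm}\int_V\Phi\,d\nu\in\overline{\mathrm{conv}}\,\Phi(V)$; as this closed convex set has the same diameter as $\Phi(V)$ and contains $\Phi(\omega)$ for every $\omega\in V$, we obtain
\[
\Bigl\|\Phi(\omega)\nu(V)-{\scriptstyle (Bi_1)\hskip-.1cm}\int_V\Phi\,d\nu\Bigr\|\le\nu(V)\,\mathrm{diam}\bigl(\Phi(V)\bigr),
\]
the inequality being trivial when $\nu(V)=0$ (both sides vanish).

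\emph{Construction of $P^{*}$.} Fix $\varepsilon>0$ and assume first $\nu(\Omega)<\infty$. By strong measurability there is a $\nu$-null set $Z$ off which $\Phi$ is measurable with values in a separable subspace $S$. Put $\eta:=\varepsilon/\bigl(2\nu(\Omega)+1\bigr)$ and cover $S$ by countably many balls $B(y_i,\eta)$, $i\in\mathbb N$, with centres in a countable dense subset of $S$. Disjointifying the sets $\Phi^{-1}\bigl(B(y_i,\eta)\bigr)\cap(\Omega\setminus Z)$ gives a countable measurable partition $\{C_i\}$ of $\Omega\setminus Z$ with $\mathrm{diam}\,\Phi(C_i)\le 2\eta$ for each $i$; let $P^{*}:=\{Z\}\cup\{C_i:C_i\ne\varnothing\}$. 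If $P'=\{V_m\}$ is finer than $P^{*}$ and $\omega_m\in V_m$, then each $V_m$ is contained in $Z$ (so the $m$-th summand is $0$) or in some $C_i$ (so $\Phi(V_m)$ is bounded of diameter $\le 2\eta$ and the geometric estimate applies), whence
\[
\sum_m\Bigl\|\Phi(\omega_m)\nu(V_m)-{\scriptstyle (Bi_1)\hskip-.1cm}\int_{V_m}\Phi\,d\nu\Bigr\|\le 2\eta\sum_m\nu(V_m)=2\eta\,\nu(\Omega)\le\varepsilon .
\]
For $\sigma$-finite $\nu$ one writes $\Omega=\bigsqcup_j E_j$ with $\nu(E_j)<\infty$, runs the above on each $E_j\setminus Z$ with $\eta$ replaced by $\eta_j:=\varepsilon 2^{-j}/\bigl(2\nu(E_j)+1\bigr)$, and takes the union of the resulting countable partitions together with $\{Z\}$; the same computation then gives the bound $\sum_j\varepsilon 2^{-j}\le\varepsilon$.

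The two displayed inequalities and the disjointification are routine; the only points that really need care are the geometric estimate — i.e.\ locating the Birkhoff integral over $V$ inside $\nu(V)\,\overline{\mathrm{conv}}\,\Phi(V)$ through the coincidence of the Birkhoff and Pettis integrals — and the bookkeeping needed to pass from finite to $\sigma$-finite measure. No essentially harder step appears, because strong measurability already forces $\Phi$ to be ``almost simple at scale $\eta$'' on a set of full measure, which is exactly the amount of oscillation control the statement requires.
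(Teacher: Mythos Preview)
The paper does not supply its own proof of this theorem: it is quoted verbatim from \cite[Th.~3.14]{candeloro2} and used as a tool in the subsequent Lemma~\ref{remark sum} and Theorem~\ref{3.3}. So there is no in-paper argument to compare against; one can only assess your proposal on its own merits.

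Your argument is correct in the $\sigma$-finite setting, which is the one the paper effectively works in (see Remark~\ref{pettisobtained}). The two ingredients are sound: (i) the \emph{geometric estimate} is the standard fact that the Pettis (hence Birkhoff) average over a set of positive finite measure lies in the closed convex hull of the range, and $\mathrm{diam}\bigl(\overline{\mathrm{conv}}\,\Phi(V)\bigr)=\mathrm{diam}\,\Phi(V)$; (ii) the partition built from strong measurability via preimages of $\eta$-balls yields pieces of image-diameter $\le 2\eta$, and on the exceptional null set both $\Phi(\omega_m)\nu(V_m)$ and ${\scriptstyle (Bi_1)}\int_{V_m}\Phi\,d\nu$ vanish. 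The passage to $\sigma$-finite $\nu$ by splitting $\Omega=\bigsqcup_j E_j$ and using weights $\varepsilon 2^{-j}$ is routine. Two small remarks: you tacitly use that $\omega\mapsto\|\Phi(\omega)-y_i\|$ is measurable (true for strongly measurable $\Phi$), and that the Birkhoff integral over a $\nu$-null set is $0$; both are immediate but worth stating. Your observation that the $Bi_1$-hypothesis is needed only to ensure the integrals over the $V_m$ exist (and coincide with the Pettis integrals) is accurate --- the oscillation control itself comes entirely from strong measurability.
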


\begin{lemma}\label{remark sum} 
If $\phi$ is a scalar  measurable function and $\Phi$ is an $X$-valued strongly measurable function in $Bi_1\left(\Omega,\nu\right)$ then, given
$$(G_n)_n:= (\left\lbrace \omega \in \Omega:  \, n-1\leq|\phi(\omega)|\leq n\right\rbrace)_n \in \mathcal{A},$$
 for every $\varepsilon>0$  there exists a measurable
 countable partition $\left\lbrace U_n^j, \ j\in\mathbb{N}\right\rbrace$ of $G_n$ such that
for every finer partition 
$\left\lbrace V_n^j, \ j\in \mathbb{N}\right\rbrace$ and for every $\omega_n^j\in V_n^j$,
\begin{equation}\label{doppia somma}
\sum_{n}\sum_j\left\|\Phi(\omega_n^j)\phi(\omega_n^j)\nu(V_n^j)-\phi(\omega_n^j)N(V_n^j)\right\|
\leq2\varepsilon.
\end{equation}
\end{lemma}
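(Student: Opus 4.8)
The plan is to reduce the estimate to Theorem \ref{3.1} applied separately on each slice $G_n$, where the scalar factor $\phi$ is uniformly bounded by $n$. Here $N$ denotes the indefinite first type Birkhoff integral $N(\cdot)={\scriptstyle (Bi_1)\hskip-.1cm}\int_{\cdot}\Phi\,d\nu$, which by Remark \ref{pettisobtained} is a countably additive vector measure. First I would record the elementary factorization, valid for every measurable $V$ and every $\omega$,
$$\left\|\Phi(\omega)\phi(\omega)\nu(V)-\phi(\omega)N(V)\right\|=|\phi(\omega)|\,\left\|\Phi(\omega)\nu(V)-N(V)\right\|,$$
so that whenever $V\subseteq G_n$ and $\omega\in V$ one has $|\phi(\omega)|\le n$ and hence
$$\left\|\Phi(\omega)\phi(\omega)\nu(V)-\phi(\omega)N(V)\right\|\le n\,\left\|\Phi(\omega)\nu(V)-{\scriptstyle (Bi_1)\hskip-.1cm}\int_{V}\Phi\,d\nu\right\|.$$

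Next, since $\Phi$ is strongly measurable and $Bi_1(\Omega,\nu)$-integrable, its restriction to the measurable set $G_n$ is strongly measurable and $Bi_1(G_n,\nu)$-integrable (integrability on a measurable subset follows from the Cauchy criterion recalled just before the lemma). Therefore I would apply Theorem \ref{3.1} to $\Phi|_{G_n}$ with tolerance $\varepsilon_n:=\varepsilon\,2^{-n}/n$, which yields a countable measurable partition $\{U_n^j:\ j\in\mathbb{N}\}$ of $G_n$ with the property that for every finer partition $\{V_n^j:\ j\in\mathbb{N}\}$ of $G_n$ and every choice of tags $\omega_n^j\in V_n^j$,
$$\sum_j\left\|\Phi(\omega_n^j)\nu(V_n^j)-{\scriptstyle (Bi_1)\hskip-.1cm}\int_{V_n^j}\Phi\,d\nu\right\|\le\varepsilon_n,$$
and the integral occurring here coincides with $N(V_n^j)$ because $V_n^j\subseteq G_n$.

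Finally I would combine the two displays above and sum over $n$:
\begin{align*}
\sum_n\sum_j\left\|\Phi(\omega_n^j)\phi(\omega_n^j)\nu(V_n^j)-\phi(\omega_n^j)N(V_n^j)\right\|
&\le\sum_n n\sum_j\left\|\Phi(\omega_n^j)\nu(V_n^j)-N(V_n^j)\right\|\\
&\le\sum_n n\,\varepsilon_n=\sum_n\varepsilon\,2^{-n}=\varepsilon\le 2\varepsilon,
\end{align*}
which is exactly \eqref{doppia somma}; the slack in the constant $2$ also leaves room to absorb the harmless overlap of consecutive $G_n$ at the integer values of $|\phi|$ (one may instead set $G_n:=\{n-1\le|\phi|<n\}$ to make the $G_n$ genuinely disjoint). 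I do not expect a real obstacle here: the only points requiring attention are the identification of $N$ with the indefinite $Bi_1$-integral of $\Phi$ and the legitimacy of invoking Theorem \ref{3.1} on each $G_n$ separately, after which the argument is just a weighted summation with a convergent choice of the tolerances $\varepsilon_n$.
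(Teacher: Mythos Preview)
Your proposal is correct and follows essentially the same route as the paper: apply Theorem~\ref{3.1} to $\Phi$ restricted to each $G_n$ with tolerance $\varepsilon/(n\,2^{n})$, use $|\phi|\le n$ on $G_n$ to pull the scalar factor out, and sum. You are in fact more explicit than the paper about the factorization and about the overlap of consecutive $G_n$, which the original proof leaves implicit.
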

\begin{proof}
By hypothesis 
 the product function $\phi(\omega)\Phi(\omega)$ is strongly measurable. 
Then, thanks to Theorem \ref{3.1}, for every $\varepsilon>0$ and for every $n$ there exists a measurable
 countable partition $\left\lbrace U_n^j, \ j\in\mathbb{N}\right\rbrace$ of $G_n$ such that, for every finer partition 
$\left\lbrace V_n^j, \ j\in \mathbb{N}\right\rbrace$ and for every $\omega_n^j\in V_n^j$, we obtain
$$\sum_{j}\left\|\Phi(\omega_n^j)\nu(V_n^j)-\int_{V_n^j}\Phi d\nu
\right\|\leq \dfrac{\varepsilon}{n2^n}.$$
Then it follows that 
\begin{equation*}
\sum_{n}\sum_j\left\|\Phi(\omega_n^j)\phi(\omega_n^j)\nu(V_n^j)-\phi(\omega_n^j)N(V_n^j)\right\|
\leq2\varepsilon.
\end{equation*}
\end{proof}

\begin{teor}\label{3.3}
Let $\phi:\Omega \rightarrow \mathbb{R}$ be a  measurable function and $\Phi  \in Bi_1(\Omega,\nu)$ be a vector valued strongly measurable function. We denote by 
$N(A)= {\scriptstyle (Bi_1)\hskip-.1cm}\displaystyle{\int}_{A}\Phi d\nu.$
Then 
 $\phi(\cdot)\Phi(\cdot)\in Bi_1\left(\Omega,\nu\right) \Longleftrightarrow 
\phi(\cdot)\in Bi_2\left(\Omega,N\right)$
and
\begin{equation}\label{equality}
{\scriptstyle (Bi_1)\hskip-.1cm}\int_{\Omega}\phi(\omega)\Phi(\omega)d\nu=
 {\scriptstyle (Bi_2)\hskip-.1cm}\int_{\Omega}\phi(\omega)dN.
\end{equation}
\end{teor}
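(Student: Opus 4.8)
The plan is to show that the Riemann-type approximating sums for the two integrals differ by a quantity that Lemma~\ref{remark sum} forces to be arbitrarily small, so that one family of sums converges exactly when the other does, and to the same limit. Throughout I use that $\phi\Phi$ is strongly measurable (a scalar measurable function times a strongly measurable one), that $N$ is a genuine countably additive $X$-valued measure by Remark~\ref{pettisobtained}, and that $\bigcup_n G_n=\Omega$ because $\phi$ is real-valued (after replacing, if needed, the $G_n$ by the disjointified sets $\{n-1\le|\phi|<n\}$ so that $(G_n)_n$ is a partition).

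First I would fix $\varepsilon>0$ and apply Lemma~\ref{remark sum}, obtaining inside each $G_n$ a countable measurable partition $\{U_n^j:j\in\mathbb{N}\}$ for which \eqref{doppia somma} holds relative to every finer partition and every tagging. The union $P_\varepsilon^{(1)}:=\{U_n^j:n,j\in\mathbb{N}\}$ is then a countable partition of $\Omega$. The crucial remark is that any countable partition $(W_k)_{k\in\mathbb{N}}$ of $\Omega$ finer than $P_\varepsilon^{(1)}$, with tags $\omega_k\in W_k$, refines $\{U_n^j:j\}$ inside each $G_n$; grouping the $W_k$ by the unique index $n$ with $W_k\subseteq G_n$ and applying \eqref{doppia somma} to these induced partitions of the $G_n$ gives
\begin{equation*}
\sum_{k=1}^{m}\bigl\|\phi(\omega_k)\Phi(\omega_k)\nu(W_k)-\phi(\omega_k)N(W_k)\bigr\|\le 2\varepsilon\qquad\text{for every }m,
\end{equation*}
and hence $\bigl\|\sum_{k=1}^{m}\phi(\omega_k)\Phi(\omega_k)\nu(W_k)-\sum_{k=1}^{m}\phi(\omega_k)N(W_k)\bigr\|\le 2\varepsilon$ for all $m$.

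For the implication $\phi\in Bi_2(\Omega,N)\Rightarrow\phi\Phi\in Bi_1(\Omega,\nu)$, let $\mathcal I$ be the $Bi_2$-integral of $\phi$ and let $P_\varepsilon^{(2)}$ be the partition witnessing this at level $\varepsilon$ in Definition~\ref{birkhoff 2 specie}. Passing to a common refinement $P_\varepsilon$ of $P_\varepsilon^{(1)}$ and $P_\varepsilon^{(2)}$, the displayed estimate together with $\limsup_m\|\sum_{k=1}^{m}\phi(\omega_k)N(W_k)-\mathcal I\|<\varepsilon$ yields $\limsup_m\|\sum_{k=1}^{m}\phi(\omega_k)\Phi(\omega_k)\nu(W_k)-\mathcal I\|\le 3\varepsilon$ for every partition $(W_k)_k$ finer than $P_\varepsilon$ and every tagging. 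As $\varepsilon>0$ is arbitrary (rescale $\varepsilon\mapsto\varepsilon/3$), this proves $\phi\Phi\in Bi_1(\Omega,\nu)$ with integral $\mathcal I$, i.e. \eqref{equality}. The converse is symmetric: starting from the $Bi_1$-integral $\mathcal J$ of $\phi\Phi$ and its witnessing partition $P_\varepsilon^{(3)}$ from Definition~\ref{b1}, refine with $P_\varepsilon^{(1)}$ and use the same displayed estimate to get $\limsup_m\|\sum_{k=1}^{m}\phi(\omega_k)N(W_k)-\mathcal J\|\le 3\varepsilon$, so $\phi\in Bi_2(\Omega,N)$ with integral $\mathcal J$, which must coincide with $\mathcal I$.

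The step I expect to require the most care is the bookkeeping in the second paragraph: one must check that the single-indexed partial sums $\sum_{k=1}^m$ occurring in Definitions~\ref{b1} and~\ref{birkhoff 2 specie} can be reorganized along the $G_n$-decomposition so that the absolutely convergent double series bounded by \eqref{doppia somma} dominates them uniformly in $m$. This hinges on the fact that Lemma~\ref{remark sum} applies to \emph{every} partition finer than $\{U_n^j\}$ and \emph{every} choice of tags, here the partition of each $G_n$ cut out by $(W_k)_k$. The remainder is the routine ``common refinement plus triangle inequality'' pattern.
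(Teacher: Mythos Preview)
Your proposal is correct and follows the same core strategy as the paper: invoke Lemma~\ref{remark sum} to bound $\sum_k\|\phi(\omega_k)\Phi(\omega_k)\nu(W_k)-\phi(\omega_k)N(W_k)\|$ uniformly, then pass to a common refinement with the partition witnessing the assumed integrability and apply the triangle inequality to get the $3\varepsilon$ bound. The only noteworthy difference is that in the implication $\phi\Phi\in Bi_1\Rightarrow\phi\in Bi_2$ the paper appeals to Theorem~\ref{3.1} (applied to $\phi\Phi$) to get the stronger term-by-term estimate $\sum_m\|\phi(\omega_m)\Phi(\omega_m)\nu(V_m)-\int_{V_m}\phi\Phi\,d\nu\|\le\varepsilon$, whereas you work directly from Definition~\ref{b1} with the $\limsup$ bound; your route is slightly more economical and makes the two directions symmetric, but the underlying mechanism is identical.
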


\begin{proof}
Suppose that  $\phi(\omega)\Phi(\omega)\in B_1\left(\Omega,\nu\right)$ and let 
$J={\scriptstyle (Bi_1)\hskip-.1cm}\displaystyle{\int}_{\Omega} \phi(\omega)\Phi(\omega)d\nu.$
Then, fixed arbitrarily $\varepsilon>0$, we can find a  measurable partition $P^*:=\{U_n:n\in \mathbb{N}\}$ of $\Omega$ such that
$$\sum_n
\left\|\Phi(\omega_n)\phi(\omega_n)\nu(U'_n)- {\scriptstyle (Bi_1)\hskip-.1cm} \int_{U'_n}\Phi(\omega)\phi(\omega)d\nu \right\|\leq \varepsilon$$
for every finer partition $\{U'_n,n\in \mathbb{N}\}$ and  for every  $\omega_n\in U'_n$. 
So, taking a partition $\{V_m,m\in \mathbb{N}\}$ finer than
  $P^*$ and a partition 
$\{V_m^k, m,k\in \mathbb{N}\}$ 
given by Lemma \ref{remark sum}, using  (\ref{doppia somma}) we infer that
\begin{eqnarray*}
&& \hskip-1cm \left\|\sum_m\phi(\omega_m)N(V_m)-J\right\| = 
\left\| \sum_m\phi(\omega_m)N(V_m)-\sum_m {\scriptstyle (Bi_1)\hskip-.1cm} \int_{V_m}\Phi(\omega)\phi(\omega)d\nu\right\|\leq\\
& \leq&  \sum_m\left\|\phi(\omega_m)N(V_m)- {\scriptstyle (Bi_1)\hskip-.1cm} \int_{V_m}\phi(\omega)\Phi(\omega)d\nu\right\|\leq\\ &\leq& 
\sum_m\left\|\phi(\omega_m)N(V_m)-\phi(\omega_m)\Phi(\omega_m)\nu(V_m)\right\|+\\
&+&  \sum_m\left\|\phi(\omega_m)\Phi(\omega_m)\nu(V_m)- {\scriptstyle (Bi_1)\hskip-.1cm}\int_{V_m}\phi(\omega)\Phi(\omega)
d\nu\right\|\leq 3\varepsilon
\end{eqnarray*}
for every choice of $\omega_m \in V_m$.
Then $\phi\in Bi_2(\Omega,N)$, as we wanted.
Conversely, 
we assume that $\phi\in Bi_2(\Omega,N)$. Then, for every $\varepsilon > 0 $
there exists a countable measurable partition 
$P^{\#} :=\{U_n:n\in \mathbb{N}\}$ such that 
$$\limsup_n \|\sum_{i=1}^n \phi(\omega_i)N(U_i)- (Bi_2)\int_{\Omega} \phi dN\|\leq \varepsilon.$$
So, considering $\{V_k, k\in \mathbb{N}\}$  a finer partition than $P^{\#}$ and $\{V_j^k, j,k\in \mathbb{N}\}$,
 as in Lemma \ref{remark sum}, we get, using (\ref{doppia somma}),
\begin{eqnarray*}
&& \limsup_n\left\|\sum_{k=1}^n \phi(\omega_k)\Phi(\omega_k)\nu(V_k)-{\scriptstyle (Bi_2)\hskip-.1cm}\int_\Omega \phi dN\right\| \leq 
\\&\leq& 
\limsup_n\left\|\sum_{k=1}^n\phi(\omega_k)\Phi(\omega_k)\nu(V_k)-\sum_{k=1}^{n}
\phi(\omega_k){\scriptstyle (Bi_1)\hskip-.1cm}\int_{V_k}\Phi d\nu\right\|+\\
&+&\limsup_n\left\|\sum_{k=1}^n\phi(\omega_k)N(V_k)- {\scriptstyle (Bi_2)\hskip-.1cm}\int_{\Omega}\phi dN\right\|\leq 3\varepsilon.
\end{eqnarray*}
This shows that $\phi(\cdot)\Phi(\cdot)\in Bi_1(\Omega,\nu)$ and the equality (\ref{equality}) is true.
\end{proof}

In order to find applications in  stochastic processes, we need to extend notions like distribution 
of a function with respect to a vector measure.
\begin{defn} \rm
Let $\phi:\Omega\rightarrow \mathbb{R}$ be a  measurable  function and 
$N:\mathcal{A}\rightarrow X$ be
a countably additive measure.
We  define 
 the measure  induced by $\phi$ as
$N_\phi(B)=N(\phi^{-1}(B))$
 for every $B\in \mathcal{B}\left(\mathbb{R}\right).$ This measure  is countably additive and we call it the  
\emph{distribution of $\phi$ with respect to $N$}.
\end{defn}
We give now an integration by substitution result  for the $(Bi_2)$ integral.
\begin{teor}\label{3.4}
 Given two measurable functions $\phi: \Omega \to \mathbb{R}$, 
 $\psi:\mathbb{R}\rightarrow \mathbb{R}$, then the following relation holds:  
${\scriptstyle (Bi_2)\hskip-.1cm}\displaystyle{\int}_{\Omega}\psi(\phi)dN={\scriptstyle (Bi_2)\hskip-.1cm}\displaystyle{\int}_{\mathbb{R}}\psi(x)dN_\phi$
under the assumption that the integrals involved  exist.
\end{teor}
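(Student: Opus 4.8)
The plan is to deduce the vector identity from a family of classical, real‑valued change‑of‑variables formulas, exploiting the fact recalled in Remark \ref{pettisobtained} that the second type Birkhoff integral is stronger than the Pettis integral. Concretely, I would fix a functional $x^{\ast}\in X^{\ast}$, scalarize both sides, reduce to the substitution theorem for the finite signed measure $x^{\ast}N$, and then remove $x^{\ast}$ by Hahn--Banach.

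First I would record the elementary compatibility of the distribution with functionals: for every $x^{\ast}\in X^{\ast}$ and every $B\in\mathcal{B}(\mathbb{R})$,
$$x^{\ast}\big(N_\phi(B)\big)=x^{\ast}\big(N(\phi^{-1}(B))\big)=(x^{\ast}N)(\phi^{-1}(B))=(x^{\ast}N)_\phi(B),$$
so the scalar measure $x^{\ast}(N_\phi)$ coincides with the push‑forward $(x^{\ast}N)_\phi$. Moreover $x^{\ast}N$ is a genuine signed measure of finite total variation, since $|x^{\ast}N|\le\|x^{\ast}\|\,\|N\|$ with $\|N\|$ the (finite) semivariation of $N$; hence the classical substitution theorem applies to it.

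Next I would use that, by hypothesis, $\psi\in Bi_2(\mathbb{R},N_\phi)$ and $\psi\circ\phi\in Bi_2(\Omega,N)$, so that Remark \ref{pettisobtained} yields: $\psi$ is integrable with respect to $x^{\ast}(N_\phi)$ and its variation, $\psi\circ\phi$ is integrable with respect to $x^{\ast}N$ and its variation, and
$$x^{\ast}\Big({\scriptstyle (Bi_2)\hskip-.1cm}\int_{\mathbb{R}}\psi\, dN_\phi\Big)=\int_{\mathbb{R}}\psi\, d(x^{\ast}N_\phi),\qquad x^{\ast}\Big({\scriptstyle (Bi_2)\hskip-.1cm}\int_{\Omega}\psi(\phi)\, dN\Big)=\int_{\Omega}\psi(\phi)\, d(x^{\ast}N).$$
Applying the classical change of variables to the right‑hand sides gives $\int_{\Omega}\psi(\phi)\, d(x^{\ast}N)=\int_{\mathbb{R}}\psi\, d(x^{\ast}N)_\phi$, and combining with the first step one obtains $x^{\ast}\big({\scriptstyle (Bi_2)\hskip-.1cm}\int_{\mathbb{R}}\psi\, dN_\phi\big)=x^{\ast}\big({\scriptstyle (Bi_2)\hskip-.1cm}\int_{\Omega}\psi(\phi)\, dN\big)$ for every $x^{\ast}\in X^{\ast}$. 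Hahn--Banach then forces equality of the two vectors.

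The only delicate point is the bookkeeping with variations, which is needed to guarantee that the two scalar integrals on the right are simultaneously well defined and that the substitution formula is legitimate: one applies the classical substitution theorem to the positive measure $|x^{\ast}N|$ to see that $\psi$ is $(|x^{\ast}N|)_\phi$‑integrable, and since $|x^{\ast}N_\phi|=|(x^{\ast}N)_\phi|\le(|x^{\ast}N|)_\phi$ this also certifies $|x^{\ast}N_\phi|$‑integrability of $\psi$. This is exactly where both halves of the standing assumption ``the integrals involved exist'' are used, and I expect it, rather than any new estimate, to be the only thing requiring care. A purely partition‑theoretic alternative — pulling a witnessing partition of $\mathbb{R}$ back by $\phi^{-1}$ and matching Birkhoff sums via the tag choice $x_k=\phi(\omega_k)$ — is also available, but it is less clean, since partitions of $\Omega$ of the form $\phi^{-1}(\cdot)$ are not cofinal among all measurable partitions of $\Omega$, so one would additionally have to argue that the $(Bi_2)$‑integral of the $\sigma(\phi)$‑measurable function $\psi\circ\phi$ is already determined by $\sigma(\phi)$‑measurable partitions; the scalarization route bypasses this.
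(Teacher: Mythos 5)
Your proposal is correct and follows essentially the same route as the paper's proof: scalarize with an arbitrary $x^{\ast}\in X^{\ast}$, use the identity $x^{\ast}(N_{\phi})=(x^{\ast}(N))_{\phi}$ together with the classical change-of-variables formula for the scalar measure $x^{\ast}(N)$, and conclude by the arbitrariness of $x^{\ast}$ (Hahn--Banach). Your extra care with total variations and the finiteness of $|x^{\ast}N|$ only makes explicit what the paper leaves implicit.
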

\begin{proof}
Suppose that the two integrals above exist as second type Birkhoff integrals. Now if we fix $x^{\ast}\in X^{\ast}$,
we can consider the real measures $x^{\ast}(N)$ and $x^{\ast}(N_{\phi})=(x^{\ast}(N))_{\phi}$. 
Then $\psi(\phi)$ is integrable with respect to $x^{\ast}(N)$ and $\psi$ is integrable with respect to $(x^{\ast}(N))_{\phi}$.
 So we obtain that
\begin{eqnarray*}
x^{\ast}\left({\scriptstyle (Bi_2)\hskip-.1cm}\int_{\Omega}\psi(\phi)dN\right)&=&\int_{\Omega}\psi(\phi)dx^{\ast}(N)=
\int_{\mathbb{R}}\psi(\omega)dx^{\ast}(N)_{\phi} = 
\\&=&
x^{\ast}\left({\scriptstyle (Bi_2)\hskip-.1cm}\int_{\mathbb{R}}\psi(\omega)dN_{\phi}\right).
\end{eqnarray*}
By the arbitrariness of $x^{\ast}$, the assertion follows.
\end{proof}
In order to introduce in this setting the definition of a martingale the notions of 
\emph{conditional expectation} and  \emph{filtration} are needed.
\begin{defn}\rm 
Let $\phi:\Omega\rightarrow \mathbb{R}$ be a  scalar function and $Bi_2(\Omega, N)$ integrable. We denote with $\sigma_\phi$ the sub-$\sigma$-algebra of $\mathcal{A}$, obtained by 
 taking all pre-images  $\phi^{-1}(B)$, for all $B\in \mathcal{B}(\mathbb{R})$.
In the special case of $\phi=z_t$, with  $t\in [0,T]$ fixed, then 
we define the \emph{natural filtration of $z_t$}, 
denoted by $\mathcal{F}_{z}=\left(\mathcal{F}_t\right)_t$, 
since 
$\mathcal{F}_t=\sigma_{z_t}$ for every $t$.
\end{defn}
Now we give
the notion of conditional expectation.

\begin{osserv}\rm
Given a sub $\sigma$-algebra $\mathcal F$
of $\mathcal{A}$, we say that an $X$-valued function is \textit{
$\mathcal{F}$-measurable}
if it is strongly measurable as 
a function from the measurable space
 $\left(\Omega,\mathcal{F}\right)$ to the measurable Banach space $\left(X,\mathcal{B}(X)\right)$.
\end{osserv}

\begin{defn}\rm
Let $\Phi\in Bi_1\left(\Omega,\nu\right)$ and $\mathcal{F}$ be a sub $\sigma$-algebra of $\mathcal{A}$. We define, provided that it exists, the 
\emph{conditional expectation of $\Phi$ with respect to $\mathcal{F}$}, indicated by $\mathbb{E}^{\nu}\left(\Phi|\mathcal{F}\right)$ ($\mathbb{E} \left(\Phi|\mathcal{F}\right)$ if there is no ambiguity about the measure), as the strongly 
 $\mathcal{F}$-measurable function $\Psi$ such that 
 $\Psi\in Bi_1(\Omega,\nu)$ and for every $E\in \mathcal{F}$ it holds 
$${\scriptstyle (Bi_1)\hskip-.1cm}\int_E \Phi d\nu={\scriptstyle (Bi_1)\hskip-.1cm}\int_E\Psi d\nu.$$
\end{defn}
From this the classic \emph{tower property} follows, i.e. for every sub $\sigma$-algebras of $\mathcal{A}$, 
such that $\mathcal{F}\subset \mathcal{G}\subset \mathcal{A}$, we have that
\begin{equation}\label{tower property}
\mathbb{E}(\Phi | \mathcal{F})=\mathbb{E}(\mathbb{E}(\Phi | \mathcal{G})|\mathcal{F}).
\end{equation}
Another important property of the conditional expectation that we can extend is the following.

\begin{teor}\label{teorema4}
Let $\Phi:\Omega\rightarrow X$ be a strongly measurable  function and $\mathcal{F}$ be a sub $\sigma$-algebra of $\mathcal{A}$, having
conditional expectation $\mathbb{E}\left(\Phi|\mathcal{F}\right)$. Then, given a  $\mathcal{F}$ measurable
 function $\phi:\Omega\rightarrow \mathbb{R}$ so that the product function $\Phi(\cdot)\phi(\cdot) \in Bi_1(\Omega, \nu)$,
 it holds:
$$\mathbb{E}\left(\Phi(\omega)\phi(\omega)|\mathcal{F}\right)=
\phi(\omega)\mathbb{E}\left(\Phi|\mathcal{F}\right).$$
\end{teor}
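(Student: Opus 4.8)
The plan is to verify that $\Theta:=\phi\,\mathbb{E}(\Phi|\mathcal{F})$ meets the three requirements in the definition of $\mathbb{E}(\Phi\phi|\mathcal{F})$: it is strongly $\mathcal{F}$-measurable, it belongs to $Bi_1(\Omega,\nu)$, and $(Bi_1)\!\int_E\Phi\phi\,d\nu=(Bi_1)\!\int_E\Theta\,d\nu$ for every $E\in\mathcal{F}$. Strong $\mathcal{F}$-measurability is immediate, $\Theta$ being the product of the $\mathcal{F}$-measurable scalar $\phi$ and the strongly $\mathcal{F}$-measurable $\mathbb{E}(\Phi|\mathcal{F})$, so the content lies in the other two points; the natural route is to dispose first of simple $\phi$ and then approximate. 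If $\phi=\mathbf{1}_A$ with $A\in\mathcal{F}$, then $A\cap E\in\mathcal{F}$ for every $E\in\mathcal{F}$, and
$$(Bi_1)\!\int_E \Phi\,\mathbf{1}_A\,d\nu=(Bi_1)\!\int_{A\cap E}\Phi\,d\nu=(Bi_1)\!\int_{A\cap E}\mathbb{E}(\Phi|\mathcal{F})\,d\nu=(Bi_1)\!\int_E \mathbf{1}_A\,\mathbb{E}(\Phi|\mathcal{F})\,d\nu,$$
the central equality being the defining property of $\mathbb{E}(\Phi|\mathcal{F})$ (the restriction $\Phi\mathbf{1}_A$ is $Bi_1$ on $A$ by the Cauchy criterion recalled after Definition \ref{birkhoff 2 specie}). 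Hence the identity holds for indicators, and, since both the $Bi_1$ integral and the conditional expectation are additive and homogeneous on their domains, it extends at once to every simple $\mathcal{F}$-measurable $\phi=\sum_{i=1}^{m}c_i\mathbf{1}_{A_i}$ with $A_i\in\mathcal{F}$.

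For a general $\mathcal{F}$-measurable $\phi$ I would pick simple $\mathcal{F}$-measurable functions $\phi_k$ with $|\phi-\phi_k|\le 1/k$ on $\Omega$ — for instance the dyadic truncations built on the sets $G_n=\{n-1\le|\phi|\le n\}\in\mathcal{F}$ from Lemma \ref{remark sum} — so that $\phi_k\,\mathbb{E}(\Phi|\mathcal{F})=\mathbb{E}(\Phi\phi_k|\mathcal{F})$ by the previous step, and then let $k\to\infty$. Writing $\Psi:=\mathbb{E}(\Phi|\mathcal{F})$, $N(A):=(Bi_1)\!\int_A\Phi\,d\nu$ and $M(A):=(Bi_1)\!\int_A\Psi\,d\nu$, the definition of $\Psi$ says that $M=N$ on $\mathcal{F}$. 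Repeating the estimate of Lemma \ref{remark sum} with $\Psi$ in place of $\Phi$ (legitimate, since $\Psi\in Bi_1(\Omega,\nu)$ by definition) and refining the partitions inside the $\mathcal{F}$-sets $G_n$, one gets, for suitably fine partitions, $\sum_n\sum_j\|\Psi(\omega_n^j)\phi(\omega_n^j)\nu(V_n^j)-\phi(\omega_n^j)M(V_n^j)\|\le 2\varepsilon$; combining this with the $Bi_1$-integrability of $\Phi\phi$ (through Theorem \ref{3.1}) and with $M=N$ on $\mathcal{F}$ shows simultaneously that $\phi\Psi\in Bi_1(\Omega,\nu)$ and that $(Bi_1)\!\int_E\phi\Psi\,d\nu=(Bi_1)\!\int_E\Phi\phi\,d\nu$ for every $E\in\mathcal{F}$, which is the assertion.

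The delicate point is this last step: for unbounded $\phi$ one cannot simply push $\mathbb{E}(\cdot|\mathcal{F})$ through the pointwise limit $\phi_k\to\phi$, because a priori neither the $Bi_1$-integrability of $\phi\Psi$ nor the convergence of the approximating integrals is guaranteed, and one must also be careful that the partition refinements needed for $Bi_1$-integrability live in $\mathcal{A}$ rather than in $\mathcal{F}$; the hypothesis that the \emph{product} $\Phi\phi$ is $Bi_1$-integrable is exactly what substitutes for a dominated-convergence theorem here, and it has to be exploited through the Cauchy-type estimates of Theorem \ref{3.1} and Lemma \ref{remark sum} rather than through pointwise limits. A more ``dual'' alternative is to test against functionals: from the definition one obtains $x^{*}\Psi=\mathbb{E}^{\nu}(x^{*}\Phi|\mathcal{F})$ for every $x^{*}\in X^{*}$, and the classical scalar identity $\mathbb{E}^{\nu}(\phi\,x^{*}\Phi|\mathcal{F})=\phi\,\mathbb{E}^{\nu}(x^{*}\Phi|\mathcal{F})$ gives $x^{*}\big((Bi_1)\!\int_E\Phi\phi\,d\nu\big)=x^{*}\big((Bi_1)\!\int_E\phi\Psi\,d\nu\big)$ for all $x^{*}$ and all $E\in\mathcal{F}$; but along this route one still has to prove separately that $\phi\Psi\in Bi_1(\Omega,\nu)$, so the same obstacle reappears.
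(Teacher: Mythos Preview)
The paper's proof is precisely your ``dual alternative,'' not your primary simple-function route. The authors fix $x^{*}\in X^{*}$, use $x^{*}\bigl(\mathbb{E}(\Phi\mid\mathcal{F})\bigr)=\mathbb{E}(x^{*}\Phi\mid\mathcal{F})$ together with the classical scalar identity to obtain
\[
x^{*}\Bigl((Bi_1)\!\int_E\Phi\phi\,d\nu\Bigr)=\int_E x^{*}(\Phi)\,\phi\,d\nu=\int_E\phi\,x^{*}\bigl(\mathbb{E}(\Phi\mid\mathcal{F})\bigr)\,d\nu=x^{*}\Bigl((Bi_1)\!\int_E\phi\,\mathbb{E}(\Phi\mid\mathcal{F})\,d\nu\Bigr)
\]
for every $E\in\mathcal{F}$, and conclude by Hahn--Banach. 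The obstacle you isolate --- that $\phi\,\mathbb{E}(\Phi\mid\mathcal{F})$ must be shown to lie in $Bi_1(\Omega,\nu)$ before the right-hand integral even makes sense --- the paper dispatches in one sentence: the displayed equalities exhibit the product as Pettis integrable, and since it is strongly measurable ``this means that the product is $Bi_1(\Omega,\nu)$-integrable.'' So the paper resolves your obstacle by asserting the implication \emph{strongly measurable $+$ Pettis $\Rightarrow$ Birkhoff}, not by any partition estimate.

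Your primary route (indicators $\to$ simple $\to$ general via the Cauchy-type bounds of Theorem~\ref{3.1} and Lemma~\ref{remark sum}) is genuinely different and would be self-contained within the Birkhoff framework, avoiding both the scalar conditional-expectation theory and the Pettis-to-Birkhoff upgrade. Its cost, which you diagnose yourself, is real: the refining partitions produced by Theorem~\ref{3.1} for $\Psi$ and for $\Phi\phi$ live in $\mathcal{A}$ rather than in $\mathcal{F}$, so you cannot directly identify $M$ with $N$ on those pieces, and your sketch does not close this gap. The paper's weak approach sidesteps exactly this bookkeeping: once one accepts the Pettis-to-Birkhoff step for strongly measurable maps, no $\mathcal{F}$-partition tracking is required at all.
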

\begin{proof}
We claim that, for every $E\in \mathcal{F}$ 
the following relation is satisfied:
\begin{equation}\label{hahn}
{\scriptstyle (Bi_1)\hskip-.1cm}\int_E\Phi(\omega)\phi(\omega)d\nu={\scriptstyle (Bi_1)\hskip-.1cm}\int_E\phi(\omega)\mathbb{E}(\Phi(\omega)|\mathcal{F})
(\omega)d\nu.
\end{equation}
For every   $x^{\ast}\in X^{\ast}$ it holds 
$x^{\ast}\left(\mathbb{E}\left(\Phi|\mathcal{F}\right)\right)=
\mathbb{E}\left(x^{\ast}\left(\Phi\right)|\mathcal{F}\right)$, 
then we obtain that
\begin{eqnarray*}
x^{\ast}\left({\scriptstyle (Bi_1)\hskip-.1cm}\int_E \hskip-.1cm \Phi(\omega)\phi(\omega)d\nu\right) &=&
\int_E\hskip-.1cm x^{\ast}\left(\Phi(\omega)\right)\phi(\omega)d\nu=
\int_E \hskip-.1cm x^{\ast}\left(\mathbb{E}\left(\Phi|\mathcal{F}\right)\right)\phi(\omega)d\nu\\
&=&x^{\ast}\left({\scriptstyle (Bi_1)\hskip-.1cm}\int_E \hskip-.1cm \mathbb{E}\left(\Phi|\mathcal{F}\right)\phi(\omega)d\nu\right).
\end{eqnarray*}
So the function  $\omega\mapsto\mathbb{E}\left(\Phi|\mathcal{F}\right)\phi(\omega)$  is Pettis integrable with respect to $\mu$;
since the product is strongly 
measurable, 
this means that the product is $Bi_1(\Omega, \mu)$-integrable. Moreover, 
by the Hahn-Banach Theorem, the formula (\ref{hahn}) follows.
\end{proof}
Theorems \ref{3.3}, \ref{3.4} and \ref{teorema4} of this section were announced in \cite[Theorems 2,3,4]{girsanov} respectively without any proof. 
\section{The Girsanov Theorem for vector measures}\label{tre}
 To the aim of extending 
 the Girsanov Theorem to the  Banach-valued measures  
we shall  find the distribution of a scalar valued stochastic process under a vector measure $N$, make a transformation of this process, compute its new distribution and then define a new measure using these two density functions.

First,
we need to define the concept of martingale when we use a 
vector measure $N$.
Let a scalar measure $\nu:\mathcal{A}\rightarrow \mathbb{R}^+_0$ be fixed and $\Phi:\mathcal{A}\rightarrow X$
 be an $X$-valued function strongly measurable and $B_1(\Omega,\nu)$ integrable. We define the vector measure
 $N:\mathcal{A}\rightarrow X$ as follows:
$
N(A) :={\scriptstyle (Bi_1)\hskip-.1cm}\displaystyle{\int}_A \Phi d\nu.$
 A measure $Q$ is equivalent to a measure $N$ ($Q \sim N$) if there exists a $Bi_2(\Omega, N)$ integrable and positive function $\varphi$ such that $dQ/dN = \varphi$.
So  the definition of martingale for a scalar process
 $\left(z_t\right)_t$ can be given.
\begin{defn}\rm
Let $\left(z_t: \Omega \to \mathbb{R} \right)_{t \in [0,T]}$ be a scalar stochastic process on the probability space $\left(\Omega,\mathcal{A},N\right)$, 
where $N$ is as above. We say that $z_t$ is a \emph{N-martingale in itself}, that is a martingale with respect to
 its natural filtration $\mathcal{F}_{z}=\left(\mathcal{F}_t\right)_t$, 
if for every $s<t, s,t\in [0,T]$, we have that $\mathbb{E}^N\left(z_t|\mathcal{F}_s\right)=z_s$. 
\end{defn}
\begin{osserv}\rm
The identity $\mathbb{E}^N\left(z_t|\mathcal{F}_s\right)=z_s$
 in terms of integrals  means that, for every 
$E\in \mathcal{F}_s$,  it is 
${\scriptstyle (Bi_2)\hskip-.1cm}\displaystyle{\int}_E z_t dN={\scriptstyle (Bi_2)\hskip-.1cm}\displaystyle{\int}_E z_s dN.$
\end{osserv}

\begin{defn}\rm 
Let $\left(z_t: \Omega \to \mathbb{R} \right)_{t \in [0,T]}$ be a scalar stochastic process on the probability space \spaces \, which is adapted to $\mathcal{F}$.
A vector-valued measure $Q$ is called an \emph{equivalent martingale measure}  
for  $(z_t)_t$  with respect to $\mathbb{P}$ 
if $(z_t)_t$ is $Bi_2$-integrable,  it is  a martingale with respect to $Q$ and $Q \sim \mathbb{P}$.
\end{defn}
  In financial market the equivalent martingal measure is called also the risk-neutral measure.
Now we give some basic assumptions on stochastic processes. 
\begin{ass}\label{da1a3}\rm
 Let us  assume that $\left( z_t\right)_{t\in [0,T]}$ is a stochastic scalar process 
 defined as usual on the space $\left(\Omega, \mathcal{A},N\right)$  such that these
 conditions are satisfied:
\begin{enumerate}
\item[\ref{da1a3}.a)] \, The function $\omega\mapsto z(t,\omega)$ belongs to the space $Bi_2(\Omega,N)$, for  
every $t\in [0,T]$, with null integral and admits a density function, that means that its distribution under the vector
 measure $N$, denoted by  $N_t(B):=N_{z_t}(B)=N(z_t^{-1}(B))$, for every 
$\mathcal{B}(\mathbb{R})$
could be written as a Birkhoff first type integral of some vector function $F_t:\mathbb{R}\rightarrow X \in Bi_1(\Omega,\lambda)$   (where $\lambda$ is  the Lebesgue measure), namely:
$$N_t(B)={\scriptstyle (Bi_1)\hskip-.1cm}\int_B F_t(x)dx.$$
\item[\ref{da1a3}.b)]Let $\tilde{z}_t=z_t+\theta(t)$, where $\theta:[0,T]\rightarrow \mathbb{R}$ is a 
measurable function. Suppose that for every $t\in [0,T]$ there exists a scalar positive function $g_t(x)$ such that the following factorization holds 
$F_t(x)=g_t(x)\tilde{F}_t(x)=g_t(x)F_t(x-\theta(t)).$
\item[\ref{da1a3}.c)]The process defined as $y_t=\left(g_t(\tilde{z}_t)\right)_t$  is a $N$-martingale in itself.
\end{enumerate}
\end{ass}
\begin{osserv} \rm
Conditions \ref{da1a3}.a) and \ref{da1a3}.b) allow us to work with the distributions of $z_t$ and $\tilde{z}_t$ under the measure $N$. We note that by \ref{da1a3}.a) it follows that the process $\tilde{z}_t$ defines again a $Bi_2(\Omega,N)$ random variable, for every $t\in[0,T]$ and it admits a density function too, of
type
$\tilde{F}_t:=F_t(x-\theta(t)).$
In fact, by Lebesgue integral translation 
invariance, substituting $r=x+\theta(t)$, we obtain
$$\tilde{N}_t(B):=N_t(B+\theta(t))=
{\scriptstyle (Bi_1)\hskip-.1cm}\int_{B+\theta(t)}F_t(x)dx=
{\scriptstyle (Bi_1)\hskip-.1cm}\int_B F_t(r-\theta(t))dr.$$
So, thanks to Theorem 
\ref{3.3}, we have
$N_t(B)={\scriptstyle (Bi_1)\hskip-.1cm}\int_{B}g_t(x)dN_{\tilde{z}_t}.$
It is important to note that $g_t$ is a positive function, so, by Assumptions \ref{da1a3}.c)
 we know that $y_t=g_t(\tilde{z}_t)$
 is a $N$-martingale.
 \end{osserv}

Finally, we recall that,  when $\{z_t\}_t$ is the classical (scalar) Brownian Motion, then
it turns out that
$g_t(x)=exp(-qx+\frac{1}{2}q^2t)$
and so the process
\begin{eqnarray}\label{esempio}
\{g_t(\widetilde{z_t})\}_t
=\{exp(-qz_t-\frac{1}{2}q^2t)\}_t
\end{eqnarray}
is a martingale with respect to the natural filtration of $\{w_t\}_t$ (see e.g. 
\cite[(4.20)]{Mikosch}, \cite{novikov}).\\

So, we  define the change of measure setting the new vector measure
$Q:\mathcal{A}\rightarrow X$
 as follows:
\begin{equation}\label{measure q}
Q(A) = {\scriptstyle (Bi_2)\hskip-.1cm}\int_Ay_T dN={\scriptstyle (Bi_1)\hskip-.1cm}\int_Ay_T\Phi d\nu,
\end{equation}
for every  $A\in \mathcal{A}$. Observe that $Q \sim N$.
Under Assumptions \ref{da1a3}, 
the marginal distribution of the 
stochastic process
$z_t$ is preserved when we change the underlying measure. This means that the distribution of $z_t$ 
under $N$ is the
same of the process $\tilde{z}_t$ under the new measure $Q$.

\begin{teor}\label{teorema5}
Let $N, Q, z_t, \tilde{z}_t$ be the vector measures and the scalar stochastic processes defined 
before. Under Assumptions \ref{da1a3},  
the marginal distributions of these two processes are preserved 
under the change of measure, that is, for every $t\in [0,T]$, 
$N_{z_t}= Q_{\tilde{z}_t}$.
\end{teor}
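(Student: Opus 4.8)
The plan is to unwind the definition of the push-forward $Q_{\tilde z_t}$, to bring the $Bi_2$-integral of $y_T$ back to time $t$ by means of the martingale hypothesis \ref{da1a3}.c), and then to recognise the result as $N_t(B)=N_{z_t}(B)$ through the integration-by-substitution Theorem \ref{3.4} and the factorisation \ref{da1a3}.b). Fix $t\in[0,T]$ and $B\in\mathcal B(\mathbb R)$. Since $\theta(t)$ is a constant, $\tilde z_t^{-1}(B)=z_t^{-1}\bigl(B-\theta(t)\bigr)\in\sigma_{z_t}=\mathcal F_t$, and by the definition \eqref{measure q} of $Q$,
$$Q_{\tilde z_t}(B)=Q\bigl(\tilde z_t^{-1}(B)\bigr)={\scriptstyle (Bi_2)\hskip-.1cm}\int_{\tilde z_t^{-1}(B)}y_T\,dN .$$
By \ref{da1a3}.c) the positive process $y_s=g_s(\tilde z_s)$ is an $N$-martingale in itself; since in the cases of interest each $g_s$ is one-to-one we have $\sigma_{y_s}=\sigma_{\tilde z_s}=\mathcal F_s$, so $\tilde z_t^{-1}(B)$ lies in the natural filtration of $(y_s)_s$, and the martingale identity between $t$ and $T$ yields
$${\scriptstyle (Bi_2)\hskip-.1cm}\int_{\tilde z_t^{-1}(B)}y_T\,dN={\scriptstyle (Bi_2)\hskip-.1cm}\int_{\tilde z_t^{-1}(B)}y_t\,dN={\scriptstyle (Bi_2)\hskip-.1cm}\int_{\tilde z_t^{-1}(B)}g_t(\tilde z_t)\,dN .$$

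Next I would record the identity $N_t(B)={\scriptstyle (Bi_2)\hskip-.1cm}\int_B g_t(x)\,dN_{\tilde z_t}$. Indeed, by \ref{da1a3}.a) and \ref{da1a3}.b), $N_t(B)={\scriptstyle (Bi_1)\hskip-.1cm}\int_B F_t(x)\,dx={\scriptstyle (Bi_1)\hskip-.1cm}\int_B g_t(x)\tilde F_t(x)\,dx$; as $\tilde F_t$ is the Lebesgue density of $N_{\tilde z_t}$ (Remark following Assumptions \ref{da1a3}), Theorem \ref{3.3} applied to the scalar function $x\mapsto g_t(x)\mathbf{1}_B(x)$ and the vector function $\tilde F_t$ turns this $Bi_1$-integral into the asserted $Bi_2$-integral and, along the way, gives $g_t\mathbf{1}_B\in Bi_2(\mathbb R,N_{\tilde z_t})$. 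Then I would invoke Theorem \ref{3.4} with $\phi=\tilde z_t$ and $\psi(x)=g_t(x)\mathbf{1}_B(x)$: since $\psi(\tilde z_t)=g_t(\tilde z_t)\mathbf{1}_{\tilde z_t^{-1}(B)}$ and both integrals exist ($y_t\in Bi_2(\Omega,N)$, and the one just discussed), the substitution gives
$${\scriptstyle (Bi_2)\hskip-.1cm}\int_{\tilde z_t^{-1}(B)}g_t(\tilde z_t)\,dN={\scriptstyle (Bi_2)\hskip-.1cm}\int_{\mathbb R}g_t(x)\mathbf{1}_B(x)\,dN_{\tilde z_t}={\scriptstyle (Bi_2)\hskip-.1cm}\int_B g_t(x)\,dN_{\tilde z_t}=N_t(B).$$
Chaining the three displays gives $Q_{\tilde z_t}(B)=N_t(B)=N_{z_t}(B)$, and since $B\in\mathcal B(\mathbb R)$ is arbitrary this is the claim.

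The step I expect to require the most care is the replacement of $y_T$ by $y_t$ under the $Bi_2$-integral over $\tilde z_t^{-1}(B)$: this is precisely the martingale property \ref{da1a3}.c), but it presupposes that $\tilde z_t^{-1}(B)$ belongs to the filtration with respect to which $(y_s)_s$ is a martingale, which is automatic when the $g_s$ are injective (as for $g_t(x)=\exp(-qx+\tfrac{1}{2}q^2 t)$ in the Brownian example \eqref{esempio}) and should otherwise be added as a standing hypothesis. The other two steps are routine bookkeeping: Theorem \ref{3.3} converts the Lebesgue-density representation of $N_t$ into an integral against $N_{\tilde z_t}$, and Theorem \ref{3.4} transports the integral over $\Omega$ to one over $\mathbb R$; the existence provisos in both theorems are met thanks to \ref{da1a3}.a) and the $Bi_2$-integrability of $y_t$.
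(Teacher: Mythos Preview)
Your proof is correct and follows essentially the same route as the paper: unwind $Q_{\tilde z_t}(B)$ as a $Bi_2$-integral of $y_T$, use the martingale hypothesis \ref{da1a3}.c) to pass from $y_T$ to $y_t=g_t(\tilde z_t)$, apply the substitution Theorem \ref{3.4}, and identify the result with $N_t(B)$ via the factorisation \ref{da1a3}.b) together with Theorem \ref{3.3}. The paper records the last identification ($N_t(B)={\scriptstyle (Bi_2)}\int_B g_t\,dN_{\tilde z_t}$) in the remark preceding the theorem rather than inside the proof, so its proof reads more tersely, but the content is the same.

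One point worth noting: you are more scrupulous than the paper about the filtration needed to replace $y_T$ by $y_t$, observing that \ref{da1a3}.c) declares $(y_s)_s$ a martingale \emph{in itself} and hence one must check $\tilde z_t^{-1}(B)\in\sigma_{y_t}$; the paper simply performs the replacement without comment. Your remedy (injectivity of $g_t$, so that $\sigma_{y_t}=\sigma_{\tilde z_t}=\mathcal F_t$) is the natural one and holds in the Brownian example. This is a genuine clarification of a step the paper leaves implicit.
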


\begin{proof}
We fix $B\in \mathcal{B}(\mathbb{R})$ and $t\in [0,T]$. By the Assumptions \ref{da1a3}.a) and \ref{da1a3}.b) we have that:
$Q_{\tilde{z}_t}(B)=Q \left((\tilde{z}_t)^{-1}(B\right))={\scriptstyle (Bi_2)\hskip-.1cm}\displaystyle{\int}_{(\tilde{z}_t)^{-1}(B)}y_TdN.$
Now, using the fact that $y_t$ is a martingale,
that is the Assumption \ref{da1a3}.c)  with respect to
$N$, and by Theorem \ref{3.4}, 
we write
\begin{eqnarray*}
\displaystyle {\scriptstyle (Bi_2)\hskip-.1cm}\int_{(\tilde{z}_t)^{-1}(B)}y_TdN 
&=&{\scriptstyle (Bi_2)\hskip-.1cm}\int_{(\tilde{z}_t)^{-1}(B)}y_tdN=
{\scriptstyle (Bi_2)\hskip-.1cm}\int_{(\tilde{z}_t)^{-1}(B)}g_t(\tilde{z}_t)dN=\\
&=&{\scriptstyle (Bi_2)\hskip-.1cm}\int_{B}g_t(r)dN_{\tilde{z}_t}.
\end{eqnarray*}
Since this holds for every $B\in \mathcal{B}(\mathbb{R})$, the proof is complete. 
\end{proof}

Our aim is to prove that 
$Q$ is an equivalent martingale measure for $(\widetilde{z}_t)_t$  with respect to $N$.
In order to do this, we assume
\begin{ass}\label{quarta}\rm
 The product process $\left(\tilde{z}_ty_t\right)_t$ is a martingale in itself with respect to $N$. 
\end{ass}

Then, we are ready to formulate the main theorem, that is
the Girsanov Theorem for vector measures 

\begin{teor}[{\rm Girsanov Theorem}]\label{GB}
Under 
Assumptions \ref{da1a3} and \ref{quarta} 
 the process $\left( \tilde{z}_t\right)_{t}$ is a $Q$-martingale in itself,  where $Q$ is
the vector measure, defined  in (\ref{measure q}).
\end{teor}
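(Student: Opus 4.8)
The plan is to imitate the classical proof of the Girsanov Theorem recalled in \cite{Mikosch}: there the only two facts really used are that the density process is a $\mathbb{P}$--martingale and that the product of the shifted process with the density is a $\mathbb{P}$--martingale, and these two facts are exactly our Assumptions \ref{da1a3}.c) and \ref{quarta}. What must be added is the translation of each step into the language of the two Birkhoff integrals, for which Theorem \ref{3.3} and the ``pull out'' property of Theorem \ref{teorema4} are the right tools. So, fix $s<t$ in $[0,T]$ and $E\in\mathcal{F}_s$, where $(\mathcal{F}_t)_t$ is the natural filtration of $\tilde z_t$ (it coincides with the natural filtration of $z_t$, since $\tilde z_t=z_t+\theta(t)$ is a deterministic translation, and it does not depend on whether one works under $N$ or under $Q$, being defined only through pre-images). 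By the definition of ``$Q$--martingale in itself'' it suffices to prove
\begin{equation}\label{eq:girgoal}
{\scriptstyle (Bi_2)\hskip-.1cm}\int_E \tilde z_t\,dQ={\scriptstyle (Bi_2)\hskip-.1cm}\int_E \tilde z_s\,dQ ;
\end{equation}
the $Bi_2(\Omega,Q)$--integrability of $\tilde z_t$ (needed for the statement to make sense) follows from Theorem \ref{teorema5}, which gives $Q_{\tilde z_t}=N_{z_t}$, together with Assumption \ref{da1a3}.a).

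First I would establish a change-of-density identity: for a scalar function $\phi$ for which the integrals exist and for $A\in\mathcal{A}$,
\begin{equation}\label{eq:girdens}
{\scriptstyle (Bi_2)\hskip-.1cm}\int_A \phi\,dQ={\scriptstyle (Bi_2)\hskip-.1cm}\int_A \phi\,y_T\,dN .
\end{equation}
This comes from applying Theorem \ref{3.3} twice: once to $N$, whose first type Birkhoff density with respect to $\nu$ is $\Phi$, and once to $Q$, whose first type Birkhoff density with respect to $\nu$ is $y_T\Phi$ by \eqref{measure q}; both sides of \eqref{eq:girdens} then equal ${\scriptstyle (Bi_1)\hskip-.1cm}\int_A \phi\,y_T\,\Phi\,d\nu$. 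Because of \eqref{eq:girdens}, the identity \eqref{eq:girgoal} is equivalent to ${\scriptstyle (Bi_2)\hskip-.1cm}\int_E \tilde z_t\,y_T\,dN={\scriptstyle (Bi_2)\hskip-.1cm}\int_E \tilde z_s\,y_T\,dN$.

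The heart of the argument is then the chain of equalities, valid for every $E\in\mathcal{F}_s$,
\begin{align*}
{\scriptstyle (Bi_2)\hskip-.1cm}\int_E \tilde z_t\,y_T\,dN
&={\scriptstyle (Bi_2)\hskip-.1cm}\int_E \tilde z_t\,y_t\,dN\\
&={\scriptstyle (Bi_2)\hskip-.1cm}\int_E \tilde z_s\,y_s\,dN
={\scriptstyle (Bi_2)\hskip-.1cm}\int_E \tilde z_s\,y_T\,dN .
\end{align*}
The first and last equalities use Assumption \ref{da1a3}.c): since $y_r=g_r(\tilde z_r)$ is an $N$--martingale in itself, $\mathbb{E}^N(y_T|\mathcal{F}_r)=y_r$; as $\tilde z_t$ is $\mathcal{F}_t$--measurable and $\tilde z_s$ is $\mathcal{F}_s$--measurable, the pull out property --- read through Theorem \ref{3.3}, which lets one compute the conditional expectation of a scalar function against $N$ by applying Theorem \ref{teorema4} to the product of that function with the vector map $\Phi$ --- yields $\mathbb{E}^N(\tilde z_t\,y_T|\mathcal{F}_t)=\tilde z_t\,y_t$ and $\mathbb{E}^N(\tilde z_s\,y_T|\mathcal{F}_s)=\tilde z_s\,y_s$, and integrating these over $E\in\mathcal{F}_s\subseteq\mathcal{F}_t$ gives the two equalities. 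The middle equality is exactly Assumption \ref{quarta}: the process $(\tilde z_r y_r)_r$ is an $N$--martingale in itself, so $\mathbb{E}^N(\tilde z_t y_t|\mathcal{F}_s)=\tilde z_s y_s$, which in integral form over $E\in\mathcal{F}_s$ is what is written. Combining this chain with \eqref{eq:girdens} used backwards proves \eqref{eq:girgoal}, and since $s<t$ and $E\in\mathcal{F}_s$ were arbitrary, $(\tilde z_t)_t$ is a $Q$--martingale in itself.

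I expect the main obstacle to be the bookkeeping rather than any single deep point: one has to verify that all the products occurring ($\tilde z_t\,y_T$, $\tilde z_t\,y_T\,\Phi$, $\tilde z_t\,y_t$, and their analogues at time $s$) are strongly measurable and lie in the appropriate $Bi_1(\Omega,\nu)$ or $Bi_2(\Omega,N)$ classes, that the conditional expectations $\mathbb{E}^N(y_T|\mathcal{F}_t)$, equivalently $\mathbb{E}^{\nu}(y_T\Phi|\mathcal{F}_t)$, do exist so that Theorems \ref{3.3} and \ref{teorema4} genuinely apply, and that the passage back and forth between integration against the scalar measure $\nu$ and integration against the vector measures $N$ and $Q$ is carried out consistently. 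Once \eqref{eq:girdens} is in place and the (measure-independent) description of the natural filtration of $\tilde z_t$ is noted, the classical computation transfers essentially verbatim.
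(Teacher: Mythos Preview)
Your proof is correct and rests on exactly the same ingredients as the paper's: the change-of-density identity \eqref{eq:girdens} obtained via Theorem \ref{3.3}, Assumption \ref{da1a3}.c) together with the pull-out property to pass from $y_T$ to $y_t$ (resp.\ $y_s$), and Assumption \ref{quarta} for the step $\tilde z_t y_t \to \tilde z_s y_s$. The only difference is organizational: the paper splits $\tilde z_t = z_t + \theta(t)$ and handles the deterministic shift $\theta(t)$ separately before reassembling, whereas you apply Assumption \ref{quarta} directly to $\tilde z_t y_t$, which is slightly more streamlined.
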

\begin{proof}
By assumptions \ref{da1a3}.a) and \ref{da1a3}.b), we can define the positive real process $y_t$ and by means of \ref{da1a3}.c),
 we have that $Q \sim N$. 
Then it is 
${\scriptstyle (Bi_2)\hskip-.1cm}\int_E\tilde{z}_t d Q={\scriptstyle (Bi_2)\hskip-.1cm}\int_E\tilde{z}_ty_T dN.$ 
Now we fix $s,t\in [0,T]$, with $s\leq t$ and $E\in \mathcal{F}_s$. Using 
the martingale property of the process
 $y_t$, we have
\begin{eqnarray*}
{\scriptstyle (Bi_2)\hskip-.1cm}\int_E\tilde{z}_t d Q&=&
{\scriptstyle (Bi_2)\hskip-.1cm}\int_E \hskip-.2cm \tilde{z}_ty_T dN= 
{\scriptstyle (Bi_2)\hskip-.1cm}\int_E \hskip-.2cm z_ty_t dN+
{\scriptstyle (Bi_2)\hskip-.1cm}\int_E \hskip-.2cm \theta(t)y_T dN= \\
&=&{\scriptstyle (Bi_2)\hskip-.1cm}\int_E  z_ty_tdN+{\scriptstyle (Bi_2)\hskip-.1cm}\int_E\theta(t)y_sdN= \\ &=&
{\scriptstyle (Bi_2)\hskip-.1cm}\int_Ez_ty_tdN+\theta(t){\scriptstyle (Bi_2)\hskip-.1cm}\int_Ey_sdN=\\
&=&{\scriptstyle (Bi_2)\hskip-.1cm}\int_Ez_ty_tdN+\theta(t)Q(E).
\end{eqnarray*}
Now, from Assumptions \ref{quarta} and \ref{da1a3}.c) we 
deduce
\begin{eqnarray*}
{\scriptstyle (Bi_2)\hskip-.1cm}\int_E \hskip-.2cm \tilde{z}_t dQ=
{\scriptstyle (Bi_2)\hskip-.1cm}\int_E \hskip-.2cm(z_sy_s+\theta(s)y_s) dN= 
{\scriptstyle (Bi_2)\hskip-.1cm}\int_E \hskip-.2cm z_sy_s dN+
 \theta(s) {\scriptstyle (Bi_2)\hskip-.1cm}\int_E \hskip-.2cm y_s dN
\end{eqnarray*}
but using the tower property (\ref{tower property}), we have 
$${\scriptstyle (Bi_2)\hskip-.1cm}\int_Ez_sy_T dN= {\scriptstyle (Bi_2)\hskip-.1cm}\int_Ez_sy_s dN,$$
and then we get
\begin{eqnarray*}
{\scriptstyle (Bi_2)\hskip-.1cm} \int_E\tilde{z}_t dQ &=&(Bi_2) \int_Ez_sy_s dN+\theta(s) (Bi_2)\int_Ey_s dN= \\ &=&
{\scriptstyle (Bi_2)\hskip-.1cm}\int_Ez_sy_T dN+\theta(s) {\scriptstyle (Bi_2)\hskip-.1cm}\int_Ey_T dN=\\
&=&{\scriptstyle (Bi_2)\hskip-.1cm} \int_E(z_s+\theta(s)) dQ={\scriptstyle (Bi_2)\hskip-.1cm} \int_E\tilde{z}_s dQ.
\end{eqnarray*}
The last relation is the martingale property of the process $\tilde{z}_t$ and the proof is completed.
\end{proof}
Theorems
\ref{teorema5} and \ref{GB} were announced in \cite[Theorems 5, 6]{girsanov};
 the last one with also a brief sketch of the proof.\\

If we consider, for example,
a Brownian motion $\left(w_t\right) _{t\geq 0}$,
 we know that this process is a martingale on
 this space with 
respect to this filtration $\mathcal{F}$. However, if we condition it on an expiration time $T>0$ fixed, the 
distribution of  this process changes and in general, it doesn't preserve some of its properties, such as the martingale property (see for example 
\cite[Theorem 5.4]{chang}).

Let 
\begin{equation}\label{condit measure 1}
N:=\mathbb{P}\left(\cdot|w_T\right):\mathcal{A}\rightarrow L^1 \left(\Omega\right).
\end{equation}
(i.e. $N(A)=\mathbb{P}(A|w_T)=E(1_A|w_T)$).
If we study the future time process $\left(w_{t+T}\right)_{t>0}$ under this new 
measure, we can observe that
 the stochastic process
 $\tilde{w}_t=w_{t+T}+qt$ is an $N$-martingale.
In fact, 
since $N(A)=\mathbb{P}\left(A|w_T\right)=\mathbb{E}\left(1_A|w_T\right),$
 the last term is a $L^1$ random variable, depending on $w_T$ and it shows that $N$ is a vector $L^1$ measure.
We observe that the process $w_{t+T}$ has the same distribution of $w_t+w_T$, where here we consider $w_t$ as independent of $w_T$, and, under the measure $N$, it has a Gaussian distribution of parameters $\mathcal{N}(w_T,t).$ Then, it admits the following density function:
$$f_t(x)=\dfrac{1}{\sqrt{2\pi t}}\exp\left(-\dfrac{(x-w_T)^2}{2t}\right).$$
Now, if we consider the transformed process $(\tilde{w_t})_t=(w_{t}+w_T+qt)_t$, its marginal distribution admits a conditional density given by
$$\tilde{f}_t(x)=\dfrac{1}{\sqrt{2\pi t}}\exp\left(\dfrac{-(x-qt-w_T)^2}{2t}\right).$$
Their quotient   is 
$$g_t(x)=\dfrac{f_t(x)}{\tilde{f}_t(x)}=\exp\left(\frac{q^2t}{2}-q(x-w_T)\right).$$
Then, evaluating  $g_t(\tilde{w}_{t+T})$, 
we obtain the process
$$y_t:=g_t(\tilde{w}_t)=\exp\left(-\frac{q^2t}{2}-qw_t\right),$$
again considering $w_t$, and then the process $y_t$, 
independent 
of $w_T$. 
Then the process $y_T$, that we know to be a martingale under $\mathbb{P}$ (by 
\cite[(4.20)]{Mikosch}),
 since it is independent by $w_T$ is a martingale even under the conditional probability $N=\mathbb{P}|w_T$.
Furthermore, we have that the process 
$$\widetilde{w}_tY_t=(w_t+qt)g_t(\widetilde{w}_t)+ w_Tg_t(\widetilde{w}_t)$$
is the sum of two martingales under the conditional probability $N$, again considering $w_t$ as independent by $w_T$,
 so it is a martingale too. Then the process $(w_{t+T})_{t>0}$, under the probability $N$ obtained conditioning with
 respect to $w_T$, satisfies all the condition of the Girsanov Theorem \ref{GB}
 and then the new process
 $\left(\tilde{w}_t\right)_t$ is a martingale under
$N=\mathbb{P}|w_T$.

\section{Some applications}\label{quattro}
Since conditional measures can be seen as vector measures, 
 using the tools obtained previously about the Birkhoff integral, we give some examples of applications of the 
Girsanov Theorem \ref{GB}. This could be useful for example  when conditioning of random variables to future  (or past times) is considered.
Now consider  a $C([0,T])$-valued stochastic process $\Phi$ defined as
\begin{eqnarray}\label{qui}
\Phi(\omega,t) &=&\exp\left\{-w_{\tau}-w_t+w_{t\wedge\tau}-\dfrac{t+\tau-\tau\wedge t}{2}\right\} \nonumber = \\ &=&
\begin{cases}
\exp\{-w_{\tau}-\tau/2\} \ \  \textit{if} \ \ t\leq \tau \\   
\exp\{-w_t-t/2\}\ \ \ \textit{if} \ \ t> \tau 	
\end{cases}
\end{eqnarray}
with $\tau \in [0,T]$  and
\begin{eqnarray}\label{Ncirc}
N^{\circ} \left(A\right)=\int_A\Phi (\omega, T) d\mathbb{P},
\end{eqnarray} be a $C([0,T])$-valued measure.
\begin{proposiz}\label{4.1}
Let $(w_t)_{t\in[0,T]}$ be a Brownian motion on the filtered probability space \spaces. 
Let $F_t$ and $\tilde{F}_t$ be the density functions  
of $w_t$ and $\tilde{w_t}=w_t+t$ under 
$N^{\circ}$, respectively.
Then, for every $t\in[0,T]$ there exists a real function $g_t(x)$ such that $F_t(x)=g_t(x)\tilde{F}_t(x)$ for every 
$x\in \mathbb{R}$ and the vector measure $Q$, 
defined by
$\dfrac{dQ}{dN^{\circ}}=g_t(\tilde{w}_t),$
is an equivalent martingale measure for the process $\tilde{w}_t$.
\end{proposiz}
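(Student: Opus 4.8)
The plan is to recognise Proposition \ref{4.1} as a concrete instance of the vector Girsanov Theorem \ref{GB}. I would take $N:=N^{\circ}$ for the base vector measure, $z_t:=w_t$ for the underlying scalar process and $\tilde z_t:=\tilde w_t=w_t+t$ for its transform, so that $\theta(t)\equiv t$, the $q=1$ instance of the scheme in (\ref{esempio}); then I would check Assumptions \ref{da1a3} and \ref{quarta}. Granting these, Theorem \ref{GB} yields that $(\tilde w_t)_t$ is a $Q$-martingale in itself, and since the density $dQ/dN^{\circ}=y_T=g_T(\tilde w_T)$ is strictly positive we also have $Q\sim N^{\circ}$; hence $Q$ is an \emph{equivalent} martingale measure for $\tilde w_t$, which is exactly the claim.

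The first block of work is to produce the $N^{\circ}$-distribution of $w_t$ and the factor $g_t$, coordinate by coordinate. From (\ref{qui}) one sees that, for each fixed $r\in[0,T]$, $\Phi(\omega,r)=\exp\{-w_{r\vee\tau}-(r\vee\tau)/2\}$ is the Dol\'eans exponential $\mathcal{E}(-w)_{r\vee\tau}$; being a $\mathbb{P}$-martingale of mean $1$, it makes $A\mapsto N^{\circ}(A)(r)$ the scalar probability measure with density $\mathcal{E}(-w)_{r\vee\tau}$ with respect to $\mathbb{P}$. By the classical scalar Girsanov theorem the process $u\mapsto w_u+\big(u\wedge(r\vee\tau)\big)$ is, under that measure, a Brownian motion $\hat w$, so $w_t$ is Gaussian with variance $t$ and mean $-\big(t\wedge(r\vee\tau)\big)$; reading off the Gaussian density on each coordinate gives the $C([0,T])$-valued density $F_t$ required in \ref{da1a3}.a). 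Translation invariance of Lebesgue measure gives $\tilde F_t(x)=F_t(x-t)$, and dividing the two Gaussian densities produces the positive factor $g_t$ of \ref{da1a3}.b); on coordinate $r$ one finds $g_t(x)=\exp\{-x-(t\wedge(r\vee\tau))+t/2\}$, which collapses to the genuine real function $g_t(x)=e^{-x-t/2}$ when $\tau\ge t$ for every $t$, i.e.\ reading $\tau$ as $T$. Substituting $\tilde w_t=w_t+t$ and using $w_t+(t\wedge(r\vee\tau))=\hat w_t$, one gets $y_t:=g_t(\tilde w_t)=\mathcal{E}(-\hat w)_t$ on every coordinate, so $y_t$ is an $N^{\circ}$-martingale --- Assumption \ref{da1a3}.c).

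The remaining step, which I expect to be the real obstacle, is Assumption \ref{quarta}: that the product $(\tilde w_t y_t)_t$ is an $N^{\circ}$-martingale in itself. The model to imitate is the conditioning example at the end of Section \ref{tre}, where such a product is split as the sum of a genuinely drift-free process --- the transformed process times the density, drift-free by an It\^o computation --- and of the conditioning variable times the density, which is a fixed random variable times a martingale. I would attempt the same splitting coordinatewise, $\tilde w_t y_t=(\hat w_t+t)\,\mathcal{E}(-\hat w)_t+\big(\text{deterministic shift}\big)\,\mathcal{E}(-\hat w)_t$, the first summand being an $N^{\circ}$-martingale by Bayes' rule (under the further density $\mathcal{E}(-\hat w)$ the process $\hat w_t+t$ is a Brownian motion). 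The catch is that the deterministic shift $-\big(t\wedge(r\vee\tau)\big)$ still depends on $t$, unlike the fixed conditioning variable of that example, so the naive decomposition does not by itself give a martingale: one must either exploit the freezing of $\mathcal{E}(-w)$ at $\tau$ built into $\Phi$, or restrict to the part of $[0,T]$ on which the shift is already constant, or add a hypothesis. The same care is also needed for the centering clause of \ref{da1a3}.a) and for \ref{da1a3}.c). Once Assumption \ref{quarta} is secured, Theorem \ref{GB} closes the proof as in the first paragraph, and I expect the bulk of the write-up to be this single verification.
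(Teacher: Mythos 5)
Your overall strategy --- recognising Proposition \ref{4.1} as the instance $N=N^{\circ}$, $z_t=w_t$, $\theta(t)=t$ of Theorem \ref{GB} and then verifying Assumptions \ref{da1a3} and \ref{quarta} --- is exactly the paper's. Your route to the densities is organised a little differently but lands in the same place. Note, though, that the coordinate bookkeeping you struggle with is simpler than you make it: the paper's opening (and key) step is the three-case computation $\mathbb{E}^{\mathbb{P}}\left(\Phi(\cdot,t)\,|\,\mathcal{F}_s\right)=\exp\{-w_s-s/2\}$, independent of the coordinate $\tau$; and since $N^{\circ}$ is built from $\Phi(\cdot,T)$, whose $\tau$-coordinate is $\mathcal{E}(-w)_{T\vee\tau}=\mathcal{E}(-w)_T$ for every $\tau\in[0,T]$, all coordinate measures of $N^{\circ}$ coincide and the shift $-\left(t\wedge(r\vee\tau)\right)$ you worry about is simply $-t$. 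Your ``collapsed'' $g_t(x)=e^{-x-t/2}$ and $y_t=g_t(\tilde w_t)=e^{-w_t-3t/2}$ agree with the paper's, and your Bayes/Girsanov argument for \ref{da1a3}.c) is a legitimate substitute for the paper's direct check that $y_t\Phi(\cdot,T)=e^{-2w_t-2t}$ is a $\mathbb{P}$-martingale.

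The genuine gap is Assumption \ref{quarta}: you correctly identify it as the crux and then leave it open (``one must either \dots or add a hypothesis''). The paper does not leave it open: it computes $\mathbb{E}^{N^{\circ}}\left(\tilde w_t y_t\,|\,\mathcal{F}_s\right)$ explicitly, reducing via the tower property to $\mathbb{E}^{\mathbb{P}}\left(w_t e^{-2w_t-2t}\,|\,\mathcal{F}_s\right)$, then splitting $w_t=(w_t-w_s)+w_s$ and invoking independence of increments together with the lognormal moment $\mathbb{E}\left((w_t-w_s)e^{-2(w_t-w_s)}\right)$; that computation is the block missing from your write-up. I must add, however, that your unease about this step is not idle. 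The moment just cited equals $-2(t-s)e^{2(t-s)}$, whereas the paper's chain of equalities uses $-(t-s)e^{2(t-s)}$; restoring the factor $2$ gives $\mathbb{E}^{N^{\circ}}\left(\tilde w_t y_t\,|\,\mathcal{F}_s\right)=(w_s+2s-t)\,y_s\neq\tilde w_s y_s$, and, equivalently, the It\^{o} differential of $\hat w_t\,\mathcal{E}(-\hat w)_t$ (with $\hat w_t=w_t+t$ an $N^{\circ}$-Brownian motion) carries the nonzero drift $-y_t\,dt$. So the decomposition you propose cannot be repaired for $\theta(t)=t$ (it would close for $\theta(t)=2t$, since the passage from $\mathbb{P}$ to $N^{\circ}$ and the passage from $N^{\circ}$ to $Q$ each contribute a unit of drift); any completion of your proof must either carry out the paper's computation as written or address this discrepancy.
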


\begin{proof}
We are going to prove that
for every $s\leq t$ in $[0,T]$
\begin{equation}
\mathbb{E}\left(\Phi (\omega,t)|\mathcal{F}_s\right)=\exp\{-w_s-\dfrac{1}{2}s\},
\end{equation}
namely it is independent of $\tau$.
We distinguish three cases:
\begin{description}
\item[($\tau\leq s\leq t$)]  in this case we have  $\Phi (\cdot, t) =\exp\{-w_t-\dfrac{1}{2}t\}$ so, being a martingale process, it
is trivial to deduce that 
$\mathbb{E}\left(\Phi (\omega,t)|\mathcal{F}_s\right)=\exp\{-w_s-\dfrac{1}{2}s\}.$
\item[($s\leq \tau \leq t$)]  again $\Phi(\omega,t)=\exp\{-w_t-\dfrac{1}{2}t\}$, then, as before, we get
$\mathbb{E}\left(\Phi (\omega, t)|\mathcal{F}_s\right)=\exp\{-w_s-\dfrac{1}{2}s\}.$
\item[($s\leq t\leq \tau$)] 
in this case $\Phi (\omega, t)=\exp\{-w_{\tau}-\dfrac{1}{2}\tau\}$, and since it is a 
martingale, it follows: 
$\mathbb{E}\left(\Phi(\omega,t)|\mathcal{F}_s\right)=\exp\{-w_{s}-\dfrac{1}{2}s\}.$
\end{description}
Now, considering the vector measure $N^{\circ}$, it is
$$\int_Ew_tdN^{\circ}=
\int_E w_t\Phi (\omega, T) d\mathbb{P}=
\int_E w_t\exp\{-w_t-\dfrac{1}{2}t\}d\mathbb{P},$$
for every $E\in \mathcal{F}_s$, where we have used the tower property (\ref{tower property}), that is
\begin{eqnarray*}
\mathbb{E}^{\mathbb{P}}\left(w_t\Phi(\omega, T)|\mathcal{F}_s\right) &=&\mathbb{E}^{\mathbb{P}}
\left(\mathbb{E}^{\mathbb{P}}\left(w_t\Phi(\omega, T)|\mathcal{F}_t\right)|\mathcal{F}_s\right)= \\ &=&
\mathbb{E}^{\mathbb{P}}\left(w_t
\mathbb{E}^{\mathbb{P}}\left(\Phi(\omega, T)|\mathcal{F}_t\right)|\mathcal{F}_s\right)=\\
&=& \mathbb{E}^{\mathbb{P}}\left(w_t\exp\{-w_t-\dfrac{1}{2}t\}|\mathcal{F}_s\right).
\end{eqnarray*}
This shows that $w_t\Phi(\omega, t)$ is not a martingale under $\mathbb{P}$ and this is equivalent to say that it is not a martingale under $N^{\circ}$.
Since the marginal distributions of the processes $w_t$ and $\tilde{w}_t$ under $\mathbb{P}$ are Gaussian with 
parameters respectively $\mathcal{N}\left(0,t\right)$ and $\mathcal{N}\left(t,t\right)$, the marginal distributions 
under $N^{\circ}$ admit a density function  obtained in this way:
\begin{eqnarray*}
N^{\circ}_t\left(B\right)&=&\int_{(w_t)^{-1}(B)}dN^{\circ}=\int_{(w_t)^{-1}(B)}\Phi(\omega, T) d\mathbb{P}= \\ &=& \int_B\mathbb{E}(\Phi(\omega, T)|\mathcal{F}_t)d\mathbb{P}_{w_t}=\\
&=&\int_B\exp\{-x-\dfrac{1}{2}t\}\dfrac{1}{\sqrt{2\pi t}}\exp\left\lbrace-\dfrac{x^2}{2t}\right\rbrace dx.
\end{eqnarray*}
Then we have the following densities function under $N^{\circ}$:
\begin{eqnarray*}
F_t(x)&=&\dfrac{1}{\sqrt{2\pi t}}\exp\left\lbrace-x-\dfrac{1}{2}t-\dfrac{x^2}{2t}\right\rbrace,\\
\tilde{F}_t(x)&=&\tilde{F}_t(x-t)=\dfrac{1}{\sqrt{2\pi t}}\exp\left\lbrace-x-t-\dfrac{1}{2}t-\dfrac{(x-t)^2}{2t}\right\rbrace,
\end{eqnarray*}
and then it is easy 
to see that the function $g_t$ we are looking for is given by
$$\dfrac{f_t(x)}{\tilde{f}_t(x)}=g_t(x)=\exp\left\lbrace-\dfrac{1}{2}t-x\right\rbrace.$$
Then we have that
$y_t=g_t(w_t+t)=\exp\left\lbrace-\dfrac{3}{2}t-w_t\right\rbrace$
and we want to prove that it is a martingale under the measure $N^{\circ}$, which is equivalent to prove that the vector process
 $\Phi(\omega, T)y_t$ is a $\mathbb{P}$-martingale. 
Again, using the tower property (\ref{tower property}), we 
find that
\begin{eqnarray*}
\mathbb{E}^{\mathbb{P}}\left(y_t\Phi(\omega, T)|\mathcal{F}_s\right)&=&\mathbb{E}^{\mathbb{P}}
\left(\mathbb{E}^{\mathbb{P}}\left(y_t\Phi(\omega, T)|\mathcal{F}_t\right)|\mathcal{F}_s\right)=\\ &=&
\mathbb{E}^{\mathbb{P}}\left(y_t
\mathbb{E}^{\mathbb{P}}\left(\Phi(\omega, T)|\mathcal{F}_t\right)|\mathcal{F}_s\right)=\\
&=&\mathbb{E}^{\mathbb{P}}\left(y_t\exp\{-w_t-\dfrac{1}{2}t\}|\mathcal{F}_s\right)=\\ &=&
\mathbb{E}^{\mathbb{P}}\left( \exp\left\lbrace-2w_t-2t\right\rbrace|\mathcal{F}_s\right)=\\
&=&\exp\{-2w_s-2s\}=\mathbb{E}^{\mathbb{P}}(y_s\Phi(\omega, T)|\mathcal{F}_s),
\end{eqnarray*}
because $\exp\{-2w_s-2s\}$ is a martingale under $\mathbb{P}$.
Since  $y_t$ is a positive $N^{\circ}$-martingale
satisfying Assumptions \ref{da1a3} with respect to $N^{\circ}$ and with $\theta(t) = t$,
 we define a new vector measure as
$\displaystyle{Q\left(A\right):=\int_Ay_T dN^{\circ}, \ A\in \mathcal{A}}$.
Now, recalling the increment invariance of the Brownian 
motion and the expected value of the
log normal random variable, we prove that
\begin{eqnarray*}
&&\mathbb{E}^{Q}\left(w_t+t|\mathcal{F}_s\right)=\mathbb{E}^{N^{\circ}}
\left(w_ty_t|\mathcal{F}_s\right)+t\mathbb{E}^{N^{\circ}}\left(y_t|\mathcal{F}_s\right)= \\ &=& 
\mathbb{E}^{\mathbb{P}}
\left(w_ty_t\Phi (\omega, T)|\mathcal{F}_s\right)+ty_s=
\mathbb{E}^{\mathbb{P}}\left(w_t\exp\{-2w_t-2t\}|\mathcal{F}_s\right)+ty_s=\\
&=&\mathbb{E}^{\mathbb{P}}\left(\left(w_t-w_s\right)\exp\{-2w_t+2w_s\}|\mathcal{F}_s\right)
\exp\left\lbrace-2w_s-2t\right\rbrace+ty_s+\\
&+&w_s\exp\left\lbrace-2w_s-2t\right\rbrace=\mathbb{E}^{\mathbb{P}}\left(\left(w_t-w_s\right)\exp\{-2w_t+2w_s\}\right)\exp\left\lbrace-2w_s-2t\right\rbrace+\\
&+&ty_s+w_sy_s=-(t-s)\exp\{-2s+2t\}\exp\{-2w_s-2t\}+(t+w_s)y_s=\\
&=&(s-t)y_s+(t+w_s)y_s=(w_s+s)y_s=\tilde{w}_sy_s.
\end{eqnarray*}
This means that the process $\tilde{w}_ty_t$ is an $N^{\circ}$-martingale and this proves Assumptions \ref{quarta} with respect $N^{\circ}$.
So, using the
Girsanov Theorem \ref{GB}, the process  
 $\tilde{w}_t$ is a $Q$-martingale and the proof is completed. 
\end{proof}

{\bf A Birkhoff integral representation}
 Using the classical Girsanov Theorem, it is possible to change the drift term of an Itô integral and to
obtain a local martingale. We want to prove an analogous result for vector processes that have an integral representation in terms of Birkhoff integrals (Theorem \ref{agB}).
The main problem is to adapt to this theory the expression
\begin{equation}\label{eqbirkhoff ito}
A_t={\scriptstyle (Bi_1)\hskip-.1cm}\int_0^t\Psi(s)ds+{(\cdot)}\int_0^t\Phi(s)dw_s, \quad (\text{under} \ \ \mathbb{P}),
\end{equation} 
where $\mathbb{P}$ is  
the underlying probability.
The first integral could be seen as a Birkhoff integral of first type. 
We can observe that the function $\Phi$ integrated with respect to the Brownian motion is a vector one and the Brownian motion could also be seen as a vector-valued function, taking values for example in $L^1\left(\Omega\right)$.
Then, the second integral in (\ref{eqbirkhoff ito}) cannot be defined as a first type and neither as a second type Birkhoff integral.
To solve this problem, we recall the Itô formula. 
If we consider as usual 
the standard Brownian Motion $w_t$ 
and the stochastic process $z_t=h(t,w_t)$, where
 the function $h:[0,T]\times \mathbb{R}\rightarrow \mathbb{R}$ is in the class 
$C^2\left( [0,T]\times \mathbb{R}\right)$, then it is:
$$d\left( z_t\right)=d\left(h(t,w_t)\right)=h'_t(t,w_t)dt+h'_x(t,w_t)dw_t+\dfrac{1}{2}h''_{xx}(t,w_t)dt$$
Obviously, in the special case that the function $h$ is linear with respect to $w_t$ and it is of type
$h(t,x)=xr(t),$
we get $h''_{xx}=0$, $h'_t=r'(t)x$ and $h'_x=r(t)$. Then we obtain
$d\left(h(t,w_t)\right)=w_tr'(t)dt+r(t)dw_t.$
This is equivalent to the following integral relation
\begin{equation}\label{ito formula}
\int_0^tr(s)dw_s=r(t)w_t-\int_0^tr'(s)w_sds.
\end{equation}
It is interesting to note that, if 
$X=\mathbb{R}$, then the equation (\ref{ito formula}) can be deduced simply applying the classic Itô formula.
If we focus on  the expression in (\ref{ito formula}), we observe that it should be used as definition of a stochastic 
integral in the Birkhoff sense.
In fact, for every fixed $t\in [0,T]$, the Brownian motion $w_t:\Omega\rightarrow \mathbb{R}$ is a scalar random
 variable and given a vector function $\Phi:[0,T]\rightarrow X$, we have that
$\Phi(t)w_t:\Omega\rightarrow X$ is an $X$-valued random variable. 
Thus, 
the first term on 
the right side of the equation (\ref{ito formula}) is an $X$-valued random variable. For the second one we need to recall the definition of \textit{
Fr\'{e}chet  
derivative,} for other definitions  see for example \cite{k0,k1,bcmed}.

\begin{defn} \rm
 A function $\Phi:[0,T] \rightarrow X$ is said to be \textit{differentiable} at a point 
$t \in [0,T]$ if exists 
 $\Phi'_t \in  \mathcal{L}\left(\mathbb{R},X\right)$, (where $\mathcal{L}\left(\mathbb{R},X\right)$ denotes the space of all continuous and linear operators from $\mathbb{R}$ to $X$)  such that
$$\lim_{h\rightarrow 0}\dfrac{\|\Phi(t+h)-\Phi(t)- \Phi'_t(h) \|_X}{|h|} = 0.$$
$\Phi'_t$ is said to be the \emph{Fr\'{e}chet 
differential of $\Phi$ at $t\in [0,T]$}. 
We say that $\Phi$ is 
\textit{differentiable on $[0,T]$} 
 if it is differentiable at every $t \in [0,T].$ 
\end{defn}

\begin{osserv} \rm 
Now, if $\Phi$ is differentiable,
we consider the $X$-valued function $t\mapsto \Phi'(t) :=\Phi'_t(1)$.
Let $(w_s)_s$ be a Brownian process. If 
 $\Phi' w$
it is first type Birkhoff integrable with respect to the Lebesgue measure, we can denote, with 
the integral notation 
${\scriptstyle (Bi_1)\hskip-.1cm}\int_0^t\Phi'(s)w_s ds$,
 an  $X$-valued random variable
defined by 
 $$\left({\scriptstyle (Bi_1)\hskip-.1cm}\int_0^t\Phi'(s)w_s ds\right) \left(\omega\right)={\scriptstyle (Bi_1)\hskip-.1cm}\int_0^t\Phi'(s)\left(w(s,\omega)\right)ds.$$
\end{osserv}
Now we are ready to define the stochastic integral of the second summand in (\ref{eqbirkhoff ito}).

\begin{defn}\label{integrale stocastico birkhoff}\rm
A function $\Phi :\mathbb{R}\rightarrow X$ 
is said to be  {\emph{stochastic $Bi_1$-integrable}} ($(Bi^{\star}_1)$ for short) with respect to a Brownian motion $w_t$ on the filtered space, \spaces \,  if
\begin{itemize}
\item $\Phi$ is differentiable on $[0,T]$ ( $\Phi'$ 
being its differential);
\item for every fixed $\omega\in \Omega$ the function $\Phi'(\cdot)w(\cdot)(\omega):[0,T]\rightarrow X$ is 
$Bi_1$-integrable with respect to the Lebesgue measure.
\end{itemize}
In this case,  for every $[a,b] \subset [0,T]$, we define 
$${\scriptstyle (Bi^{\star}_1)\hskip-.1cm}\int_a^b\Phi(r)dw_r =
\Phi(b)w_b-\Phi(a)w_a-{\scriptstyle (Bi_1)\hskip-.1cm}\int_{[a,b]}\Phi'(r)w_rdr.$$
\end{defn}
\begin{ass}\label{additional hypothesis}\rm
Suppose that $\Phi$ is strongly measurable and differentiable 
on $[0,T]$ 
and such that
for every $x^{\ast}\in X^{\ast}$, 
the function $<x^{\ast},\Phi(\cdot)> \in L^2\left(\Omega\right)$ and satisfies
\begin{equation}\label{differenziale duale}
\dfrac{d}{dt}\left(<x^{\ast},\Phi(t)>\right)=<x^{\ast},\Phi'(t)>.
\end{equation}
\end{ass}
\begin{osserv} \rm
When $X=\mathbb{R}$,
Definition \ref{integrale stocastico birkhoff}  agrees with the classic Itô formula.
In fact, given 
a  scalar function $r(t)$, it is
$\int_0^tr(s)dw_s=r(t)w_t-\int_0^tr'(s)w_sds.$
Note that in this case, the Fr\'{e}chet 
differential of $r$ is the classic differential given by
$D_t(r)(t)=r'(t)t,$ then $D_t(r)(1)=r'(t)$  and  Assumptions \ref{additional hypothesis} hold.
\end{osserv}
From now on, we suppose that $\Phi, \Psi$ are $Bi^{\star}_1$-integrable and satisfy Assumptions \ref{additional hypothesis}.
 So, we get the following
\begin{proposiz}\label{stella}
For every $x^{\ast}\in X^{\ast}$, the process $<x^{\ast},\Phi>$ is integrable with respect to the Brownian
 motion $(w_t)_t$ and moreover 
\begin{equation}\label{uguaglianza birkhoff e pettis integrale}
<x^{\ast},{\scriptstyle (Bi^{\star}_1)\hskip-.1cm}\int_0^t\Phi(s)dw_s>=\int_0^t<x^{\ast},\Phi(s)>dw_s.
\end{equation}
\end{proposiz}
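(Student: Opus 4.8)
The plan is to reduce the vector identity (\ref{uguaglianza birkhoff e pettis integrale}) to the scalar integration‑by‑parts form (\ref{ito formula}) of It\^o's formula by testing against an arbitrary functional $x^{\ast}\in X^{\ast}$. First I would fix $x^{\ast}\in X^{\ast}$ and $t\in[0,T]$ and apply $x^{\ast}$ to the defining equality of the stochastic $Bi_1$‑integral in Definition \ref{integrale stocastico birkhoff}; by linearity and continuity of $x^{\ast}$ this gives
\[
<x^{\ast},{\scriptstyle (Bi^{\star}_1)\hskip-.1cm}\int_0^t\Phi(s)dw_s>=<x^{\ast},\Phi(t)>w_t-<x^{\ast},\Phi(0)>w_0-<x^{\ast},{\scriptstyle (Bi_1)\hskip-.1cm}\int_0^t\Phi'(r)w_r dr>.
\]
Because the first type Birkhoff integral is stronger than the Pettis integral (Remark \ref{pettisobtained}), $x^{\ast}$ passes inside it: for each fixed $\omega$,
\[
<x^{\ast},{\scriptstyle (Bi_1)\hskip-.1cm}\int_0^t\Phi'(r)w(r,\omega)dr>=\int_0^t<x^{\ast},\Phi'(r)>w(r,\omega)dr,
\]
so the last summand is the scalar random variable $\int_0^t<x^{\ast},\Phi'(r)>w_r dr$.

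Next I would set $r(s):=<x^{\ast},\Phi(s)>$. By Assumptions \ref{additional hypothesis}, $r$ is differentiable with $r'(s)=<x^{\ast},\Phi'(s)>$ thanks to (\ref{differenziale duale}), and $r$ belongs to $L^2$, so $r$ is integrable with respect to the Brownian motion; this already proves the first assertion of the Proposition. Applying the classical It\^o formula to $h(s,w_s)=r(s)w_s$ exactly as in the derivation of (\ref{ito formula}) --- here $h''_{xx}=0$, $h'_s(s,x)=r'(s)x$, $h'_x(s,x)=r(s)$ --- yields
\[
\int_0^t r(s)dw_s=r(t)w_t-r(0)w_0-\int_0^t r'(s)w_s ds.
\]
Rewriting $r(t)=<x^{\ast},\Phi(t)>$, $r(0)=<x^{\ast},\Phi(0)>$, $r'=<x^{\ast},\Phi'>$ and comparing term by term with the expansion obtained above for $<x^{\ast},{\scriptstyle (Bi^{\star}_1)\hskip-.1cm}\int_0^t\Phi(s)dw_s>$, the three summands coincide, which is (\ref{uguaglianza birkhoff e pettis integrale}).

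The computations are routine; the care lies in two points. First, one must observe that $\Phi'$ is strongly measurable (it is a pointwise limit of difference quotients of the strongly measurable $\Phi$) and adapted, so that $\Phi'(\cdot)w_{\cdot}$ and $<x^{\ast},\Phi(\cdot)>$ genuinely lie in the domains of, respectively, the first type Birkhoff integral and the Wiener integral; both facts are already subsumed in the standing hypothesis that $\Phi$ is $Bi^{\star}_1$‑integrable and satisfies Assumptions \ref{additional hypothesis}. The main obstacle, rather than any calculation, is reconciling the \emph{pathwise} identity produced by pushing $x^{\ast}$ through the Birkhoff integral --- valid for (almost) every $\omega$ --- with the It\^o integral $\int_0^t r(s)dw_s$, which is a priori only defined as an $L^2$‑limit: it is precisely the integration‑by‑parts form (\ref{ito formula}) of It\^o's formula that furnishes an almost sure representation of $\int_0^t r(s)dw_s$ by that same pathwise expression, and hence both sides of (\ref{uguaglianza birkhoff e pettis integrale}) agree almost surely, which is the sense in which the equality is to be read.
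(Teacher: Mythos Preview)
Your argument is correct and follows essentially the same route as the paper: apply $x^{\ast}$ to the defining formula of the $(Bi^{\star}_1)$-integral, push $x^{\ast}$ inside the $(Bi_1)$-integral, invoke Assumption \ref{additional hypothesis} to identify $<x^{\ast},\Phi'(r)>$ with $\dfrac{d}{dr}<x^{\ast},\Phi(r)>$, and then recognise the resulting scalar expression as the right-hand side of the It\^o integration-by-parts formula (\ref{ito formula}) for $h(s,w_s)=<x^{\ast},\Phi(s)>w_s$. Your additional remarks on measurability and on the pathwise versus $L^2$ interpretation are helpful clarifications but do not alter the structure of the proof, and the extra $\Phi(0)w_0$ terms you carry simply vanish since $w_0=0$.
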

\begin{proof}
Fixed $x^{\ast}\in X^{\ast}$, it is
\begin{eqnarray*}
<x^{\ast},{\scriptstyle (Bi^{\star}_1)\hskip-.1cm}\int_0^t\Phi(r)dw_r> &=& <x^{\ast},\Phi(t)w_t>-<x^{\ast},{\scriptstyle (Bi_1)\hskip-.1cm}\int_0^t \hskip-.2cm \Phi'(r)w_rdr>\\
 &=& <x^{\ast},\Phi(t)>w_t-\int_0^t \hskip-.2cm <x^{\ast},\Phi'(r)w_r>dr \\ 
&=& <x^{\ast},\Phi(t)>w_t-\int_0^t \dfrac{d}{dt}\left(<x^{\ast},\Phi(t)>\right) w_rdr\\
&=& \int_0^t \hskip-.2cm <x^{\ast},\Phi(r)>dw_r,
\end{eqnarray*}
where, in the last equalities, we have used the Assumptions \ref{additional hypothesis} and the formula  (\ref{ito formula})
applied to $h(t,w_t)=
w_t<x^{\ast},\Phi(t)>$. Then, the assertion follows.
\end{proof}

Moreover, the next result holds.
\begin{teor}\label{davedere}
The process
$(A_t)_{t\in [0,T]} :=\left({\scriptstyle (Bi^{\star}_1)\hskip-.1cm}\int_0^t\Phi(s)dw_s\right)_{t\in [0,T]}$
is a martingale with respect to the natural filtration of the Brownian motion $(w_t)_t$.
\end{teor}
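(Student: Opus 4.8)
The plan is to reduce the vector-valued martingale property of $(A_t)_t$ to the classical scalar statement that a Wiener--It\^{o} integral is a martingale, by testing against functionals $x^{\ast}\in X^{\ast}$ and then lifting the conclusion back to $X$ via the Hahn--Banach theorem; this is the same scheme already used in the proofs of Theorems \ref{3.4} and \ref{teorema4}.

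First I would record what $A_t$ is: by Definition \ref{integrale stocastico birkhoff}, and since $w_0=0$,
\begin{equation*}
A_t(\omega)=\Phi(t)\,w_t(\omega)-{\scriptstyle (Bi_1)\hskip-.1cm}\int_{[0,t]}\Phi'(r)\,w_r(\omega)\,dr .
\end{equation*}
From this I would verify the two facts needed for the conditional expectation $\mathbb{E}(A_t|\mathcal{F}_s)$ to be meaningful, where $(\mathcal{F}_t)_t$ is the natural filtration of $(w_t)_t$: that $A_t$ is strongly $\mathcal{F}_t$-measurable, and that $A_t\in Bi_1(\Omega,\mathbb{P})$. For measurability, $\Phi(t)w_t$ is $\mathcal{F}_t$-measurable because $w_t$ is, while the integral term is, for each $\omega$, a Birkhoff limit of finite sums $\sum_k w_{r_k}(\omega)\,\Phi'(r_k)\,\lambda(V_k)$ with evaluation points $r_k\in[0,t]$, hence $\mathcal{F}_t$-measurable; strong measurability follows from the separability of the range of $\Phi'$. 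For integrability, from $\|A_t(\omega)\|\le |w_t(\omega)|\,\|\Phi(t)\|+{\scriptstyle (Bi_1)\hskip-.1cm}\int_{[0,t]}|w_r(\omega)|\,\|\Phi'(r)\|\,dr$ together with Tonelli's theorem and the integrability of the Brownian paths, $A_t$ is Bochner integrable, hence $Bi_1(\Omega,\mathbb{P})$-integrable.

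Next I would fix $x^{\ast}\in X^{\ast}$. By Proposition \ref{stella} and formula (\ref{uguaglianza birkhoff e pettis integrale}) one has $<x^{\ast},A_t>=\int_0^t<x^{\ast},\Phi(s)>dw_s$, which under Assumptions \ref{additional hypothesis} (the integrand $<x^{\ast},\Phi(\cdot)>$ being a deterministic, suitably square-integrable function of time) is a classical scalar Wiener--It\^{o} integral, hence a martingale for $(\mathcal{F}_t)_t$. Therefore, for all $s\le t$ in $[0,T]$ and all $E\in\mathcal{F}_s$,
\begin{align*}
<x^{\ast},{\scriptstyle (Bi_1)\hskip-.1cm}\int_E A_t\,d\mathbb{P}> &= \int_E<x^{\ast},A_t>\,d\mathbb{P}=\int_E<x^{\ast},A_s>\,d\mathbb{P} \\
&= <x^{\ast},{\scriptstyle (Bi_1)\hskip-.1cm}\int_E A_s\,d\mathbb{P}> ,
\end{align*}
the outer equalities using that the $Bi_1$ integral commutes with bounded linear functionals (Remark \ref{pettisobtained}). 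Since this holds for every $x^{\ast}$, the Hahn--Banach theorem gives ${\scriptstyle (Bi_1)\hskip-.1cm}\int_E A_t\,d\mathbb{P}={\scriptstyle (Bi_1)\hskip-.1cm}\int_E A_s\,d\mathbb{P}$ for every $E\in\mathcal{F}_s$; as $A_s$ is $\mathcal{F}_s$-measurable and $Bi_1(\Omega,\mathbb{P})$-integrable, this is precisely $\mathbb{E}(A_t|\mathcal{F}_s)=A_s$, i.e. $(A_t)_t$ is a martingale with respect to the natural filtration of $(w_t)_t$.

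I expect the genuinely delicate step to be the verifications in the second paragraph — the strong $\mathcal{F}_t$-measurability and Bochner integrability of the map $\omega\mapsto {\scriptstyle (Bi_1)\hskip-.1cm}\int_{[0,t]}\Phi'(r)\,w_r(\omega)\,dr$ — since these are exactly what guarantee that $A_t$ lies in $Bi_1(\Omega,\mathbb{P})$ and hence that the conditional expectation in the statement is well defined. Once those are in place, the reduction to the scalar case is immediate from Proposition \ref{stella}, and the passage from ``every $<x^{\ast},A_t>$ is a scalar martingale'' to ``$(A_t)_t$ is an $X$-valued martingale'' is the standard Hahn--Banach argument.
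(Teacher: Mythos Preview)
Your argument is correct and follows essentially the same route as the paper: apply Proposition \ref{stella} to reduce to the scalar It\^o integral, use that the latter is a martingale, and lift back to $X$ by Hahn--Banach via arbitrariness of $x^{\ast}$. The paper's proof is terser---it omits the measurability and $Bi_1$-integrability checks you carry out in your second paragraph and works directly with $<x^{\ast},\mathbb{E}(A_t|\mathcal{F}_s)>$ rather than with integrals over $E\in\mathcal{F}_s$---but the underlying mechanism is identical.
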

\begin{proof}
Let $s\leq t$ in $[0,T]$ and fix $x^{\ast}\in X^{\ast}$. By  
Proposition \ref{stella} we have that
$<x^{\ast},\Phi(r)>$ is integrable with respect to the Brownian motion $(w_t)_t$ and
\begin{eqnarray*}
&&   <x^{\ast},\mathbb{E}\left({\scriptstyle (Bi^{\star}_1)\hskip-.1cm}  \int_0^t \Phi(r)dw_r|\mathcal{F}_s\right)> \,
 =
\mathbb{E}\left(\int_0^t<x^{\ast},\Phi(r)>dw_r|\mathcal{F}_s\right)=
\\ &&=
\mathbb{E}\left(\int_0^s<x^{\ast},\Phi(r)>dw_r\right)=
<x^{\ast},{\scriptstyle (Bi_1)\hskip-.1cm}\int_0^s\Phi(r)dw_r>.
\end{eqnarray*}
So, by arbitrariness of $x^{\ast}$, we have
$$\mathbb{E}\left({\scriptstyle (Bi^{\star}_1)\hskip-.1cm}\int_0^t\Phi(r)dw_r|\mathcal{F}_s\right)={\scriptstyle (Bi^{\star}_1)\hskip-.1cm}\int_0^s\Phi(r)dw_r$$
so the proof is completed.
\end{proof}

Now, we consider a process as follows:
$$C_t={\scriptstyle (Bi_1)\hskip-.1cm}\int_0^t\Psi(s)ds+{\scriptstyle (Bi^{\star}_1)\hskip-.1cm}\int_0^t \Phi(s) dw_s, \quad \quad (\text{under   }  \mathbb{P}),  \quad t\in [0,T]$$
(in the scalar case it is an Itô process).
We would like to eliminate the drift term so that we obtain a local martingale. To obtain it, we need a hypothesis of connection between the drift term $\Psi$ and the diffusion term $\Phi$: namely  we need to know if there exists a stochastic process $r(t)$ such that $\Psi(t) = r(t)\Phi(t)$, for every $t\in [0,T]$ (and this is the process that we use to define the change of measure).
The main peculiarity of this kind of result is that we could change the drift term, without changing the diffusion term. 

\begin{teor}{\rm ([Change of a drift term])}\label{agB}
Let $(w_s)_s$ be a Brownian motion on \spaces ,
$\Phi, \Psi$ be two processes that are $Bi^{\star}_1$-integrable and satisfy Assumptions \ref{additional hypothesis}.
 Let 
$C_t:[0,T]\times \Omega \rightarrow X$ be the  stochastic process 
defined by 
$$ C_t={\scriptstyle (Bi_1)\hskip-.1cm}\int_0^t\Psi(s)ds+{\scriptstyle (Bi^{\star}_1)\hskip-.1cm}\int_0^t \Phi(s) dw_s, \quad \quad (\text{under   }  \mathbb{P}).$$
If there exists
  $r:[0,T] \rightarrow \mathbb{R}$ be in $L^2([0,T])$ such that
  $\Psi(t) = r(t)\Phi(t)$ and 
the process    
$y_t=\exp\left\lbrace-\frac{1}{2}\int_0^t r^2(s)ds - \int_0^t r(s)dw_s\right\rbrace$
 is a martingale with respect to $\mathcal{F}$, denoted by 
 $Q^{\circ} = {\scriptstyle (Bi_1)\hskip-.1cm}\int y_T d\mathbb{P}$ and  $\tilde{w_t}=w_{t}+\int_0^t r(s)ds$,  
then  $$C_t={\scriptstyle (Bi^{\star}_1)\hskip-.1cm}\int_0^t\Phi(s)d\tilde{w}_s, \quad (\text{under} \ \ Q^{\circ}).$$
Therefore the process $C_t$ is a martingale under $Q^{\circ}$.
\end{teor}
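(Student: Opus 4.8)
The plan is to reduce everything to the scalar case via duality, exactly in the spirit of Proposition \ref{stella} and Theorem \ref{davedere}. First I would rewrite $C_t$ using the hypothesis $\Psi(t)=r(t)\Phi(t)$ and the definition of the stochastic $Bi_1^\star$-integral: since $\Psi(t)=r(t)\Phi(t)$, the first summand ${\scriptstyle (Bi_1)\hskip-.1cm}\int_0^t\Psi(s)\,ds={\scriptstyle (Bi_1)\hskip-.1cm}\int_0^t r(s)\Phi(s)\,ds$. Then I would fix an arbitrary $x^{\ast}\in X^{\ast}$ and apply $x^{\ast}$ to both sides; by Proposition \ref{stella} and linearity of the first-type Birkhoff integral we get
$$
\langle x^{\ast},C_t\rangle=\int_0^t r(s)\langle x^{\ast},\Phi(s)\rangle\,ds+\int_0^t\langle x^{\ast},\Phi(s)\rangle\,dw_s.
$$
This is now an ordinary (scalar) It\^o process with drift $r(s)\langle x^{\ast},\Phi(s)\rangle$ and diffusion $\langle x^{\ast},\Phi(s)\rangle$, so the classical Girsanov Theorem (as in \cite{Mikosch}) applies on the right-hand side. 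Under the classical change of measure with density $y_T$, the process $\tilde w_t=w_t+\int_0^t r(s)\,ds$ is a Brownian motion and $\langle x^{\ast},C_t\rangle=\int_0^t\langle x^{\ast},\Phi(s)\rangle\,d\tilde w_s$ (again classical It\^o / Girsanov drift removal for scalars).

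Next I would connect the classical scalar change of measure with the vector change of measure $Q^{\circ}={\scriptstyle (Bi_1)\hskip-.1cm}\int y_T\,d\mathbb{P}$. For each $x^{\ast}$, $x^{\ast}(Q^{\circ})(A)=\int_A y_T\,dx^{\ast}(\mathbb{P})$ is, up to a scalar factor, the classical Girsanov measure associated to the scalar process $\langle x^{\ast},C_t\rangle$; hence the scalar computation above is valid under $x^{\ast}(Q^{\circ})$ for every $x^{\ast}$. I would then verify, via Proposition \ref{stella} applied with $\tilde w$ in place of $w$, that ${\scriptstyle (Bi_1^\star)\hskip-.1cm}\int_0^t\Phi(s)\,d\tilde w_s$ is well-defined under $Q^{\circ}$ (the hypotheses of Definition \ref{integrale stocastico birkhoff} on $\Phi'(\cdot)\tilde w(\cdot)$ follow from those on $\Phi'(\cdot)w(\cdot)$ together with integrability of $\int_0^{\cdot}r(s)\,ds$, which is finite since $r\in L^2([0,T])$) and that $\langle x^{\ast},{\scriptstyle (Bi_1^\star)\hskip-.1cm}\int_0^t\Phi(s)\,d\tilde w_s\rangle=\int_0^t\langle x^{\ast},\Phi(s)\rangle\,d\tilde w_s$. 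Combining this with the displayed scalar identity gives $\langle x^{\ast},C_t\rangle=\langle x^{\ast},{\scriptstyle (Bi_1^\star)\hskip-.1cm}\int_0^t\Phi(s)\,d\tilde w_s\rangle$ for all $x^{\ast}\in X^{\ast}$, and by the Hahn--Banach Theorem the two vector-valued random variables coincide, proving $C_t={\scriptstyle (Bi_1^\star)\hskip-.1cm}\int_0^t\Phi(s)\,d\tilde w_s$ under $Q^{\circ}$.

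Finally, the martingale conclusion follows from Theorem \ref{davedere}: that theorem shows that a process of the form ${\scriptstyle (Bi_1^\star)\hskip-.1cm}\int_0^t\Phi(s)\,dw_s$ is a martingale with respect to the natural filtration of the driving Brownian motion; applying it with $\tilde w$ (which, under $Q^{\circ}$, is a Brownian motion) gives that $C_t$ is a $Q^{\circ}$-martingale. I expect the main obstacle to be the bookkeeping in the second step: one must check that all the integrability and differentiability conditions of Definition \ref{integrable stocastico birkhoff} and Assumptions \ref{additional hypothesis} transfer from $w$ to $\tilde w$ under the new measure $Q^{\circ}$, and that the scalar Girsanov change of measure really is compatible with the vector measure $Q^{\circ}$ slice-by-slice through each functional $x^{\ast}$; once these are in place, the argument is a routine duality reduction to the classical one-dimensional Girsanov Theorem.
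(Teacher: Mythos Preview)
Your proposal is correct and follows essentially the same route as the paper: reduce to the scalar case by pairing with an arbitrary $x^{\ast}\in X^{\ast}$, invoke the classical Girsanov theorem to identify $\tilde w$ as a Brownian motion under $Q^{\circ}$ and to obtain the scalar drift-removal identity, then use Proposition~\ref{stella} (for $\tilde w$) together with Hahn--Banach to lift the identity back to $X$, and finally appeal to Theorem~\ref{davedere} for the martingale property. The only superfluous step is your ``slice-by-slice'' discussion of $x^{\ast}(Q^{\circ})$: since $y_T$ is scalar and $\mathbb{P}$ is a scalar probability, $Q^{\circ}$ is already an ordinary scalar probability measure, so no compatibility check between vector and scalar changes of measure is needed.
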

\begin{proof}
By construction and using the classical Girsanov Theorem the process $\tilde{w_t}$ is a 
Brownian motion under the new probability  $Q^{\circ}$.
Now we claim that
\begin{eqnarray}\label{eq pettis}
{\scriptstyle (Bi^{\star}_1)}\int_0^t\Phi(s)d\tilde{w}_s=
{\scriptstyle (Bi^{\star}_1)}\int_0^t\Phi(s)dw_s +
{\scriptstyle (Bi_1)}\int_0^t r(s)\Phi(s)ds
\end{eqnarray}
as a vector equivalence between the
 Birkhoff stochastic integral and the Birkhoff integral.
To prove this we consider, for every $x^{\ast}\in X^{\ast}$,
\begin{eqnarray*}
&& <x^{\ast},
{\scriptstyle (Bi^{\star}_1)}\int_0^t\Phi(s)d\tilde{w}_s>
= \int_0^t<x^{\ast},\Phi(s)>d\tilde{w}_s=
\\ &=&\int_0^t<x^{\ast},\Phi(s)>\left(dw_s + r(s)ds\right)=\\
&=&\int_0^t<x^{\ast},\Phi(s)>dw_s + \int_0^t<x^{\ast},\Phi(s)>r(s)ds=\\
&=&<x^{\ast}, {\scriptstyle (Bi^{\star}_1)}\int_0^t \Phi(s)dw_s> +
<x^{\ast},{\scriptstyle (Bi^{\star}_1)}\int_0^t \Phi(s)r(s)ds>=\\
&=&<x^{\ast},{\scriptstyle (Bi^{\star}_1)}\int_0^t \Phi(s)dw_s +
{\scriptstyle (Bi_1)}\int_0^t\Phi(s)r(s)ds>.
\end{eqnarray*}
So, by the arbitrariness of $x^{\ast}\in X^{\ast}$, 
the equation (\ref{eq pettis}) holds. Thus,
\begin{eqnarray*}
C_t&=&{\scriptstyle (Bi^{\star}_1)}\int_0^t\Psi(s)ds+
{\scriptstyle (Bi^{\star}_1)}\int_0^t\Phi(s)dw_s=\\
&=&{\scriptstyle (Bi^{\star}_1)}\int_0^t \Psi(s)ds+
{\scriptstyle (Bi^{\star}_1)}\int_0^t\Phi(s)d\tilde{w}_s-
{\scriptstyle (Bi^{\star}_1)}\int_0^tr(s)\Phi(s)ds= \\ &=&
{\scriptstyle (Bi^{\star}_1)}\int_0^t\Phi(s)d\tilde{w}_s,
\end{eqnarray*}
and then we have that, 
$$C_t={\scriptstyle (Bi^{\star}_1)}\int_0^t\Psi(s)d\tilde{w}_s \quad (\mbox{under } Q^{\circ})$$ and it turns out to be a 
martingale, thanks to Theorem \ref{davedere}.
\end{proof}

\section*{Acknowledgements}
This is a post-peer-review, pre-copyedit version of an article published in Mediterranean Journal of  Mathematics. The final authenticated version is available online at: http://dx.doi.org/10.1007\%2Fs00009-019-1431-x.
\small
\par\noindent\rule{\textwidth}{1pt}\\

\noindent\begin{minipage}{0.25\textwidth}
\includegraphics[width=0.98\textwidth]{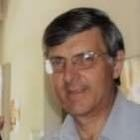} 
\end{minipage}
\hfill
\begin{minipage}{0.75\textwidth} \footnotesize
Udine $\star$ 10/18/1951, Rome \dag \, 05/03/2019.
His research was devoted to Measure Theory and Integration, with particular concern for:
    many different types of integrals, with applications to Calculus of Variations, Stochastic Integration,  infinite-dimensional integrals; multivalued integration with applications  to
  the  properties of measurable
and integrable multifunctions and to the search of Equilibria; finitely additive measures, concerning structural problems (non-atomicity, extensions and restrictions, decompositions) and applications to Probability, in particular Stochastic integration for infinite-dimensional Processes.
\end{minipage}
\normalsize


\begin{thebibliography}{99}
\bibitem{birkhoff} G. Birkhoff, \emph{Integration of functions with values in a Banach space}, Trans. Amer. Math. Soc., {\bf 38} (2), (1935), 357-378, 

\bibitem{bcmed}  A. Boccuto, D. Candeloro,   \textit{Differential calculus in Riesz spaces and applications to $g$-calculus}, Med. J. Math., {\bf 8}(3), (2011), 315-329, Doi: 10.1007/s00009-010-0072-x

\bibitem{bcs2014}  A. Boccuto, D. Candeloro and A. R. Sambucini,   \textit{Henstock multivalued integrability in Banach lattices with respect to pointwise non atomic measures},  Atti Accad. Naz. Lincei Rend. Lincei Mat. Appl. {\bf 26} (4)  (2015), 363-383, Doi: 10.4171/RLM/710.

\bibitem{Boccutosambucini} A. Boccuto,  A. R. Sambucini,  \textit{A note on comparison between Birkhoff and Mc Shane integrals for multifunctions}, Real Analysis Exchange,  {\bf 37} (2), 3--15,  (2012),
Doi: 10.14321/realanalexch.37.2.0315.


\bibitem{ccgs2015a} D. Candeloro, A. Croitoru, A. Gavrilu\c{t},  A.R. Sambucini, \textit{An Extension of the Birkhoff Integrability for Multifunctions}, Mediterranean Journal Math., 
\textbf{13} (5),  2551-2575, (2016), Doi: 10.1007/s00009-015-0639-7. 

\bibitem{candeloro1} D. Candeloro, L. Di Piazza, K. Musial, A.R. Sambucini,  \textit{Relations among gauge and Pettis integrals for multifunctions with weakly compact convex values}, Annali di Matematica,   {\bf 197} (1), (2018), 171-183,  Doi: 10.1007/s10231-017-0674-z.

\bibitem{candeloro2} D. Candeloro, L. di Piazza, K. Musial, A.R. Sambucini, \emph{Some new result on the integration for 
multifunctions}, Ricerche di Mate\-matica, 
{\bf 67} (2), (2018), 361-372,  
Doi: 10.1007/s11587-018-0376-x.

\bibitem{cs2014} D. Candeloro, A. R. Sambucini,   \textit{Order-type Henstock and Mc Shane integrals in Banach lattice setting},  Proceedings of the  SISY 2014 - IEEE 12th International Symposium on Intelligent Systems and Informatics, 
 55-59; ISBN 978-1-4799-5995-2, (2014), 55-59, Doi: 10.1109/SISY.2014.6923557.

\bibitem{cs2015} D. Candeloro, A. R. Sambucini,  \textit{Comparison between some norm and order gauge integrals in Banach lattices},  PanAm. Math. J. {\bf 25}(3), (2015),   1-16.

\bibitem{m09} D. Candeloro, C.C.A. Labuschagne, V. Marraffa, A.R. Sambucini, {\em  Set-valued Brownian motion},  Ricerche di Matematica, vol 67 (2), (2018), 347-360. Doi: 10.1007/s11587-018-0372-1

\bibitem{girsanov} D. Candeloro, A. R. Sambucini, \textit{A Girsanov Result Through Birkhoff Integral}, 
Computational Science and Its Applications ICCSA 2018, (chapter) O. Gervasi et al. (Eds.): ICCSA 2018, LNCS 10960, (2018), 676–683,  Doi: https://doi.org/10.1007/978-3-319-95162-1\_47. 

\bibitem{cmn} D. Caponetti,  V. Marraffa, 
K. Naralenkov, \emph{On the integration of Riemann-measurable vector-valued functions}, Monatsh. Math., {\bf 182}, (2017), 513-536 DOI 10.1007/s00605-016-0923-z.

\bibitem{cascales1} B. Cascales, J.Rodr\'iguez, \emph{Birkhoff integral for multi-valued functions. Special issue
 dedicated to John Horváth.}, J. Math. Anal. Appl.  {\bf 297} (2), (2004), 540-560.

\bibitem{cascales} B. Cascales, J. Rodr\'iguez, \emph{The Birkhoff integral and the property of Bourgain} Math. Ann.  {\bf 331} (2),  (2005),  259--279.

\bibitem{chang} J. Chang, \emph{Stochastic Processes} https://iid.yale.edu/sites/default/files/files/chang-notes.pdf (1999)

\bibitem{cc} K. Cicho\'n, M. Cicho\'n,   \textit{Some Applications of Nonabsolute Integrals in the Theory of Differential Inclusions in Banach Spaces}, G.P. Curbera,G. Mockenhaupt, W.J. Ricker (Eds.), Vector Measures, Integration and Related Topics, in: Operator Theory: Advances and Applications,  {\bf 201},
 Birkh\H{a}user-Verlag, ISBN: 978-3-0346-0210-5 (2010), 115--124.

\bibitem{bf} A. Croitoru, N. Mastorakis, \emph{Estimations, convergences and comparisons on
fuzzy integrals of Sugeno, Choquet and Gould type}, IEEE International Conference on Fuzzy Systems (FUZZ-IEEE), (2014), 1205-1212, Doi: 10.1109/FUZZ-IEEE.2014.6891590.

\bibitem{cgsub1} A. Croitoru, A. Gavrilu\c{t} and A. E.  Iosif,  \textit{Birkhoff weak integrability of multifunctions}, International Journal of Pure Mathematics {\bf 2}, (2015),  47--54.

\bibitem{cg} A. Croitoru,  A. Gavrilu\c{t}, 
\emph{Comparison between Birkhoff integral and Gould integral}, 
Mediterr. J. Math. {\bf 12} (2),  (2015),  329-347.

\bibitem{cg2016} A. Croitoru, A. Iosif, N. Mastorakis and  A. Gavrilu\c{t},  \textit{Fuzzy multimeasures in Birkhoff weak set-valued integrability},  2016 Third International Conference on Mathematics and Computers in Sciences and in Industry (MCSI), Chania, (2016),  128-135,
Doi: 10.1109/MCSI.2016.034.

\bibitem{dms16}   L. Di Piazza, V. Marraffa e B.Satco, {\em Set valued integrability in non separable Fr\'echet spaces and applications}, Math. Slovaca,  {\bf 66} (5),  (2016) 1119-1138.

\bibitem{fernandez} A. Fern\'andez, F. Mayoral, F. Naranjo, J. Rodr\'iguez, \emph{On Birkhoff integrability for scalar functions and vector measures}, Monatsh. Math., {\bf 157} (2), (2009),  131-142.

\bibitem{Fremlin0} D. H. Fremlin, \emph{Integration  of  vector-valued  functions}, Atti Semin. Mat. Fis. Univ. Modena,  {\bf 42},(1994), 205-211.

\bibitem{Fremlin1} D. H. Fremlin, J. Mendoza, \emph{On the integration of vector-valued functions} Illinois J. Math. ,  {\bf 38} (1),(1994),  127-147.


\bibitem{Fremlin2} D. H. Fremlin, \emph{The McShane and Birkhoff integrals of vector-valued functions}, University of Essex Mathematics Department Research Report 92-10, version of 13.10.04 available at URL http://www.essex.ac.uk/math8/staff/fremlin/preprints.htm

\bibitem{GL} J. J. Grobler, C.C. A.  Labuschagne
 \emph{Girsanov's theorem in vector lattices}, prepint ResearchGate January 2019.

\bibitem{k0} S.B. Kaliaj, \emph{ Some Full Characterizations of Differentiable
Functions}, Mediterr. J. Math. {\bf 12}, (2015), 639-646,
DOI 10.1007/s00009-014-0458-20378-620X/15/030639-8.

\bibitem{k1} S.B. Kaliaj, \emph{Differentiability and Weak Differentiability}, 
Mediterr. J. Math. {\bf 13}, (2016), 2801–2811,
DOI 10.1007/s00009-015-0656-61660-5446/16/052801-11.

\bibitem{lm10}    C.C.A. Labuschagne, V. Marraffa, “On set-valued cone absolutely summing map”, Central European Journal of Mathematics,  {\bf  8} (1),  (2010), 148-157.
 
\bibitem{vm} V. Marraffa,   \textit{A Birkhoff type integral and the Bourgain property in a locally convex space}, Real Analysis Exchange,  {\bf 32} (2), (2006-2007), 409--428. Doi: 10.14321/realanalexch.32.2.0409.



\bibitem{Mikosch} T. Mikosch, \emph{Elementary stochastic calculus (with finance in view)}, World Scientific Publishing Co.Pte.Ltd. (1998).


\bibitem{novikov}  A. Novikov, \emph{A certain identity for stochastic integrals}, Theory of Probability \& Its Applications,  {\bf 17} (4),  (1972), 717–720.


\bibitem{pettis} B.J. Pettis, \emph{On the integration in vector spaces}, Trans. Amer. Math. Soc.  {\bf 44}, (1938),  277-304.

\bibitem{potyrala} M. Potyrala, \emph{Some remarks about Birkhoff and Riemann-Lebesgue integrability of vector valued function}, Tatra Mt. Math Publ.,  {\bf 35},  (2007),  97-106.

\bibitem{potyrala2} M. Potyrala, \emph{The Birkhoff and variational McShane integrals of vector valued functions}, Folia Mathematica, {\bf 13} (1), (2006), 31-39.

\bibitem{rodriguez1} J. Rodríguez, \emph{Absolutely summing operators and integration of vector-valued functions}, J. Math. Anal. Appl., {\bf 316} (2),  (2006),  579–600.

\bibitem{rodriguez2} J. Rodríguez, \emph{Pointwise limits of Birkhoff integrable functions},  Proc. Amer. Math. Soc.,  {\bf 137} (1),  (2009),  235–245.
 
\bibitem{rodriguez3} J. Rodríguez, \emph{Universal Birkhoff integrability in dual Banach spaces},  Quaest. Math., {\bf 28} (4), (2005) ,  525–536.

\bibitem{trastulli} L. Trastulli, \emph{Methods of integration with respect to non
additive measures and some applications}, Tesi Magistrale (2018).



\end{thebibliography}
\end{document}